\theoremstyle{plain}
\newtheorem{theorem}{Theorem}
\newtheorem{corollary}{Corollary}
\newtheorem{proposition}{Proposition}
\newtheorem{assertion}{Claim}
\theoremstyle{definition}
\newtheorem{property}{Property}
\begin{document}

\noindent UDC 519.17

\title{Digraphs of potential barriers: properties of their tree structure and algorithm for constructing minimum spanning forests} 
\author{V.\,A. Buslov} 
\begin{center}
{\bf Digraphs of potential barriers: properties of their tree structure and algorithm for constructing minimum spanning forests} 
\end{center}
\begin{center}
{\large V.\,A. Buslov}
\end{center}

\begin{abstract}
  For a weighted digraph without loops $V$, the arc weights of which can be obtained from an undirected graph with loops ${\sf P}$ according to the rule $v_{ij}=p_{ij}-p_{ii}$, the properties are studied. An effective algorithm for constructing directed trees of minimum weight and an algorithm for constructing spanning directed forests of minimum weight are proposed.  
\end{abstract}

This article is based on the results of works \cite{V6}-\cite{V8}. Definitions and designations correspond to those adopted there. 

\section{Definitions and notations}

The work uses both directed and undirected graphs, forests and trees. For brevity, we use the terms graph, forest and tree, denoting both undirected and directed, if this does not lead to misunderstandings. Where necessary, we indicate exactly what type of graphs we have in mind.

For a digraph $G$, we denote the set of its vertices by ${\cal V}G$, and the set of arcs by ${\cal A}G$. If the graph $G$ is undirected, then ${\cal E}G$ is the set of its edges.

The original object is a weighted digraph $V$, with a set of vertices ${\cal V}V={\cal N}$, $|{\cal N}|=N$; its arcs $(i,j)\in{\cal A}V$ are assigned real weights $v_{ij}$. We consider spanning subgraphs (with a set of vertices ${\cal N}$) that are entering (incoming) forests. An entering forest is a digraph in which no more than one arc comes from each vertex and there are no contours. A tree is a connected component of a forest. The root of a tree (forest) is the vertex from which arc does not come. Let $T^F_i$ denote the tree of  forest $F$ rooted at vertex $i$. We denote the set of roots of the forest $F$ by ${\cal K}_F$.  The outgoing forest is obtained by replacing all the arcs of the entering forest with inverse ones (then the root is the vertex into which arcs do not go).  

Of the directed ones, only entering forests are used, in the future simply --- forests, until the moment when undirected graphs do not appear in the presentation. An undirected forest is a graph without cycles, its connected components are trees.

The subgraph $H$ of a graph $G$ induced by the set ${\cal S}\subset{\cal V}G$ (or the restriction of the graph $G$ to the set ${\cal S}$) is a subgraph in which ${\cal V}H={\cal S}$, and the set of its arcs is all arcs of the graph $G$, both ends of which belong to the set ${\cal S}$. For it we use the notation $H=G|_{\cal S}$.  

If there is an arc whose outcome belongs to the set ${\cal S}$, but its entry does not, then we say that the arc comes from the set ${\cal S}$. Similarly, if there is an arc whose entry belongs to ${\cal S}$ and whose outcome does not belong, then we say that the arc enters ${\cal S}$.

The outgoing neighborhood ${\cal N}^{out}_{\cal S}(G)$ of the set ${\cal S}$ is the set of entries of arcs coming in graph $G$ from the set ${\cal S}$; The incoming neighborhood ${\cal N}^{in}_{\cal S}(G)$ is defined similarly.

For a subgraph $G$ of a graph $V$ and a subset of the vertex set ${\cal S}\subseteq {\cal N}$, we introduce weights   
\begin{equation}
\Upsilon^G_{\cal S}=\sum_{\begin{smallmatrix}i\in{\cal S} \\ (i,j)\in {\cal A}G\end{smallmatrix}} v_{ij} \ , \ \ \Upsilon^G=\Upsilon^G_{\cal N}=\sum_{(i,j)\in {\cal A}G} v_{ij} \ . \label{ves}
\end{equation}
The value $\Upsilon^G_{\cal S}$ is also formed from arcs whose entries do not belong to the set ${\cal S}$. If in a graph $G$ no arcs come from the set ${\cal S}$ itself, then $\Upsilon^G_{\cal S}=\Upsilon^{G|_{\cal S}} $. 

${\cal F}^k$ is a set of forests consisting of $k=1,2,\ldots ,N$ trees.  We denote the minimum weight $\Upsilon^F$ among all forests $F\in{\cal F}^k$ by $\phi^k$:
\begin{equation}
\phi^k=\min_{F\in{\cal
F}^k}\Upsilon^F \ .
\label{phi}
\end{equation}

If ${\cal F}^k=\emptyset$, we set $\phi^k=\infty$, in particular, $\phi^0=\infty$. The set ${\cal F}^N$ consists of only one empty forest and $\phi^N=0$. Any forest in ${\cal F}^{N-1}$ contains exactly one arc, so $\phi^{N-1}=\min\limits_{(i,j)\in{\cal A}V}v_{ij}$. 

$\tilde{\cal F}^k$ is a subset of the set of forests ${\cal F}^k$ on which the minimum of $\phi^k$ is achieved: $
F\in\tilde{\cal F}^k\Leftrightarrow \ F\in{\cal F}^k$ and $\Upsilon^F=\phi^k$. Forests from $\tilde{\cal F}^k$ are called minimal. 

${\cal F}^k|_{\cal S}$ --- the set of subgraphs of $k$-component forests induced by the set ${\cal S}$;

$\tilde{\cal F}^k|_{\cal S}$ --- the set of subgraphs of $k$-component minimum weight forests induced by the set ${\cal S}$; 

$F^G_{\uparrow{\cal S}}$ --- the graph obtained from $F$ by replacing the arcs coming from the vertices of the set ${\cal S}$ with arcs coming from the same vertices in  graph $G$.

For any subset ${\cal S}\subset {\cal N}$, its complement $\overline{\cal S}={\cal N}\setminus {\cal S}$.

Forest $F\in{\cal F}^{k}$ is called a {\it pseudo-ancestor} of a forest $G\in{\cal F}^{k-1}$, and $G$ is a {\it pseudo-descendant} of $F$, if there is a root $j$ of $F$ such that: ${\cal K}_G={\cal K}_F\setminus \{ j \}$ and $G|_{{\cal V}T^F_q}=T^F_q$ for $q\in{\cal K}_G$. Such forests are called pseudo-related, and the set of such pseudo-descendants of $F$ is denoted by ${\cal P}^F_j$. 

Forest $F\in{\cal F}^{k}$ is called an {\it ancestor} of a forest $G\in{\cal F}^{k-1}$, and $G$ is called a {\it descendant} of a forest $F$, if for some $j\in{\cal K}_F$ the following holds: $G\in{\cal P}^F_j$ and $G|_{{\cal V}T^F_j}$ is a tree. Forests $F$ and $G$ are called related, and the set of such descendants of a forest $F$ is denoted by ${\cal R}^F_j$. The set of all descendants of a forest $F$ is denoted by ${\cal R}^F=\underset{j\in {\cal K}_F}{\cup}{\cal R}^F_j$. 

\section{Properties used}
\subsection{Arc replacement operation} 

We will need a simple property \cite[Corollary 2 of Lemma 1]{V6} of the operation of replacing arcs ing from the vertices of a certain set in two arbitrary forests. Let us formulate it in a somewhat broader setting. 

\begin{property} \cite{V6} {\it Let $F$ and $G$ be forests, ${\cal V}G \subseteq {\cal V}F$, let ${\cal D}\subseteq {\cal V}G$ be a subset of the vertex set that is not entered by arcs in the forest $F$. Then graph $F_{\uparrow\cal D}^G$ is a forest.} 
\end{property}
A thinner one will also be used.
\begin{property} \cite[Corollary 6 from Lemma 1]{V6} {\it Let $F$ and $G$ be forests with the same vertex set, and let $T^F$ and $T^G$ be trees of the forests of $F$ and $G$, respectively, and let ${\cal D}\subset{\cal V}T^F\cap{\cal V}T^G$, such that ${\cal N}^{in}_{\cal D}(G)=\emptyset$ and ${\cal N}^{out}_{\cal D}(G)\subset{\cal V}T^G\setminus{\cal V}T^F $. Then the graphs $F_{\uparrow\cal D}^G$ and $G_{\uparrow\cal D}^F$ are forests.}
\end{property}

\subsection{Minima of weight on subsets of a vertex set}

In \cite{V8}, for any subset ${\cal S}$ of the set of all vertices ${\cal N}$,  special tree-like minima are defined: 

\begin{equation}
 \lambda_{\cal S}^{\bullet q}=\min_{T\in {\cal T}^{\bullet q}_{\cal S}}\Upsilon^T, \  \lambda_{\cal S}^\bullet= \min_{q\in{\cal S}} \lambda_{\cal S}^{\bullet q},  
\label{bt}
\end{equation} 
where ${\cal T}^{\bullet q}_{\cal S}$ is a set of trees with vertex set ${\cal S}$ and root at vertex $q\in {\cal S}$. The second of the minima (\ref{bt}) can be rewritten as
\begin{equation}
\lambda_{\cal S}^\bullet=\min_{T\in {\cal T}^{\bullet}_{\cal S}}\Upsilon^T \ , \ \ {\cal T}^\bullet_{\cal S}= 
\underset{q\in{\cal S}}{\cup}{\cal T}^{\bullet q}_{\cal S} \ , 
\label{Taus}
\end{equation}
using the disjoint union of sets ${\cal T}^{\bullet q}_{\cal S}$ by vertices $q\in{\cal S}$. 

Let us also identify subsets of trees on which the corresponding minima are achieved: $T\in\tilde{\cal T}^{\bullet q}_{\cal S}$ if $T\in{\cal T}^{\bullet q}_{\cal S}$ and $\Upsilon^T=\lambda_{\cal S}^{\bullet q}$; $T\in\tilde{\cal T}^{\bullet }_{\cal S}$ if $T\in{\cal T}^{\bullet }_{\cal S}$ and $\Upsilon^T=\lambda_{\cal S}^{\bullet }$. 

Also in \cite{V8} forest-like minima are defined
   
\begin{equation}
 \mu_{\cal S}^{\bullet q}=\min_{F\in {\cal F}^{\bullet q}_{\cal S}}\Upsilon^F_{\cal S}\ , \ \ \mu_{\cal S}^\bullet= \min_{q\in{\cal S}} \mu_{\cal S}^{\bullet q}, 
\label{bf}
\end{equation} 
where ${\cal F}^{\bullet q}_{\cal S}$ is the set of spanning forests such that if $F\in {\cal F}^{\bullet q}_{\cal S}$, then the set ${\cal S}$ contains exactly one root of forest $F$, namely vertex $q\in{\cal S}$. In this case, the entries of arcs coming in the forest $F$ from the vertices of the set ${\cal S}$ may or may not belong to the set ${\cal S}$ itself. Arcs coming in $F$ from the vertices of $\overline{\cal S}$ do not affect the value $\mu_{\cal S}^{\bullet q}$.  

For these quantities the following is true

\begin{property}\cite[Proposition 2]{V8}
{\it Let $F\in\tilde{\cal F}^k$ and $l\in{\cal K}_F$, 
then }
\begin{equation}
\mu_{\cal S}^{\bullet l} = \mu_{\cal S}^{\bullet }= 
\lambda_{\cal S}^\bullet = \lambda_{\cal S}^{\bullet l} =\Upsilon^F_{\cal S} \ ,  
\label{mlu}
\end{equation}
{\it where}  ${\cal S}={\cal V}T^F_l$.   
\end{property}

Forest-like minima are also defined for the case when the set ${\cal S}$ does not contain forest roots: 

\begin{equation}
\mu^{\circ }_{\cal S} = \min_{F\in{\cal F}^\circ_{{\cal S}}} \Upsilon^F_{\cal S} \ , 
\label{cf} 
\end{equation} 
where ${\cal F}^{\circ}_{\cal S}$ is a set of spanning forests in which arcs come from all vertices of the set ${\cal S}$. Arcs come from the vertices of $\overline{\cal S}$ in any forest $F\in {\cal F}^\circ_{\cal S}$ are arbitrary and do not affect the weight value. 

For all introduced weights, both tree-like (type $\lambda$) and forest-like (type $\mu$), we assume that they are equal to infinity if the corresponding set of trees (${\cal T}$) and, accordingly, forests (${\cal F}$) is empty.  

\section{Tree-like weights on a subset not containing roots}

We are further interested in the situation when arcs come from all vertices of the set ${\cal S}$ and they form a special tree. Namely, we introduce a set of trees ${\cal T}^{\circ q}_{\cal S}$, which are subgraphs of the original graph $V$, and if $T\in{\cal T}^{\circ q}_{\cal S}$, 
then $|{\cal V}T|=|{\cal S}|+1$ and  $T|_{\cal S}\in{\cal T}^{\bullet q}_{\cal S}$. This means that in the tree $T$, from the set ${\cal S}$ itself, there is exactly one arc, and it comes from the vertex $q$. Let this be the arc $(q,r)$. Obviously, $r\in\overline{\cal S}$ and this vertex is the root of the tree $T$. Despite the fact that $r$ is the root, this vertex is not of particular interest in the context of this work. Therefore, it is not an index in the set of trees under consideration. But the vertex $q$ is more significant, since the subject of consideration is the arcs coming from the vertices of the set ${\cal S}$, to which the vertex $q$ belongs, unlike $r$. Note that the weights of the trees of the corresponding trees $T$ and $T_q=T|_{\cal S}$ are related by the ratio

\begin{equation}
\Upsilon^{T}=\Upsilon^{T_q}+v_{qr}. 
\label{U}
\end{equation}

 Let us also introduce the disjoint union

\begin{equation}
{\cal T}^{\circ }_{\cal S}=\mathop\cup_{q\in{\cal S}}{\cal T}^{\circ q}_{\cal S}\ 
\end{equation}
  and determine the weights

\begin{equation}
\lambda_{\cal S}^\circ =\min_{T\in {\cal T}_{\cal S}^\circ}\Upsilon^{T} , \ \ \lambda_{\cal S}^{\circ q}=\min_{T\in {\cal T}^{\circ q}_{\cal S}}\Upsilon^{T} \ .
\label{c}
\end{equation} 
Obviously
\begin{equation}
\lambda_{\cal S}^\circ =\min_{q\in {\cal S}} \lambda_{\cal S}^{\circ q} \ .
\label{cm}
\end{equation}  
Note that for different trees $T$ and $T'$ from ${\cal T}^\circ_{\cal S}$, the sets of their vertices, generally speaking, do not coincide, since the root of the tree can be any vertex from $\overline{\cal S}$.

Let us find out the connections between the introduced weights. 
If the tree $T\in{\cal T}_{\cal S}^\circ$ is supplemented with the remaining vertices to a spanning subgraph, then we obtain a spanning forest belonging to the set ${\cal F}_{\cal S}^\circ$. Therefore, the following inequality is satisfied
  
\begin{equation}
\mu^\circ_{\cal S} \le \lambda^\circ_{\cal S}  \ .
\label{mll}
\end{equation} 
Note that for an arbitrary ${\cal S}$, \cite{V8} a similar inequality $
\mu^\bullet_{\cal S} \le \lambda^\bullet_{\cal S}$ holds. 
Equality (\ref{mlu}) holds if ${\cal S}$ is the vertex set of some tree in the minimal forest. A similar situation holds for inequality (\ref{mll}).

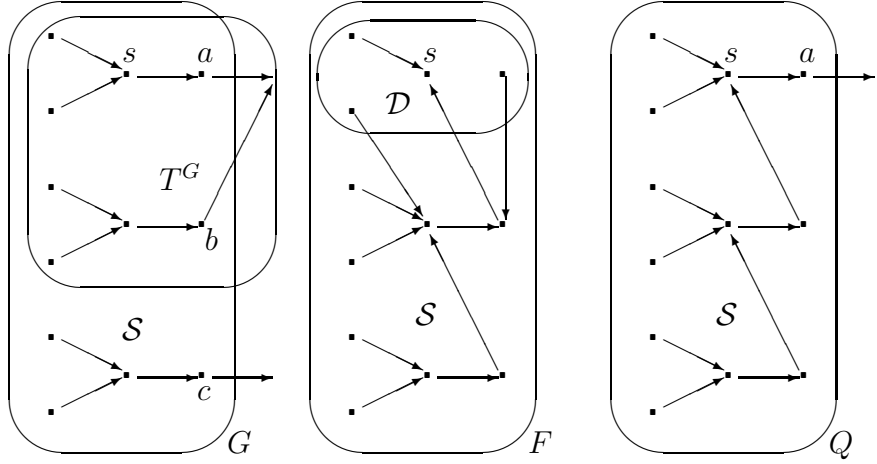
\begin{figure}[h]
\unitlength=1mm
\begin{center}
\begin{picture}(115,60)

\put(25,52){$a$}
\put(26,27){$b$}
\put(25,7){$c$}

\put(15,52){$s$}
\put(20,35){$T^G$}
\put(15,15){${\cal S}$}
\put(29,0){$G$}
\put(5,55){$\centerdot$}
\put(5,45){$\centerdot$}
\put(5,35){$\centerdot$}
\put(5,25){$\centerdot$}
\put(5,15){$\centerdot$}
\put(5,5){$\centerdot$}

\put(15,10){$\centerdot$}
\put(15,30){$\centerdot$}
\put(15,50){$\centerdot$}

\put(25,10){$\centerdot$}
\put(25,30){$\centerdot$}
\put(25,50){$\centerdot$}

\put(7,6){\vector(2,1){8}}
\put(7,26){\vector(2,1){8}}
\put(7,46){\vector(2,1){8}}

\put(7,15){\vector(2,-1){8}}
\put(7,35){\vector(2,-1){8}}
\put(7,55){\vector(2,-1){8}}

\put(17,10){\vector(1,0){8}}
\put(17,30){\vector(1,0){8}}
\put(17,50){\vector(1,0){8}}

\put(27,10){\vector(1,0){8}}
\put(26,31){\vector(1,2){9}}
\put(27,50){\vector(1,0){8}}

\put(19,40){\oval(33,36)}
\put(15,30){\oval(30,60)}

\put(55,52){$s$}
\put(69,0){$F$}
\put(50,45){${\cal D}$}
\put(54,17){${\cal S}$}
\put(45,55){$\centerdot$}
\put(45,45){$\centerdot$}
\put(45,35){$\centerdot$}
\put(45,25){$\centerdot$}
\put(45,15){$\centerdot$}
\put(45,5){$\centerdot$}

\put(55,10){$\centerdot$}
\put(55,30){$\centerdot$}
\put(55,50){$\centerdot$}

\put(65,10){$\centerdot$}
\put(65,30){$\centerdot$}
\put(65,50){$\centerdot$}

\put(47,6){\vector(2,1){8}}
\put(47,26){\vector(2,1){8}}
\put(46,45){\vector(2,-3){9}}

\put(47,15){\vector(2,-1){8}}
\put(47,35){\vector(2,-1){8}}
\put(47,55){\vector(2,-1){8}}

\put(57,10){\vector(1,0){8}}
\put(57,30){\vector(1,0){8}}

\put(65,11){\vector(-1,2){9}}
\put(65,31){\vector(-1,2){9}}
\put(66,50){\vector(0,-1){19}}
\put(55,50){\oval(28,15)}
\put(55,30){\oval(30,60)}

\put(105,52){$a$}
\put(95,52){$s$}
\put(109,0){$Q$}
\put(94,17){${\cal S}$}
\put(85,55){$\centerdot$}
\put(85,45){$\centerdot$}
\put(85,35){$\centerdot$}
\put(85,25){$\centerdot$}
\put(85,15){$\centerdot$}
\put(85,5){$\centerdot$}

\put(95,10){$\centerdot$}
\put(95,30){$\centerdot$}
\put(95,50){$\centerdot$}

\put(105,10){$\centerdot$}
\put(105,30){$\centerdot$}
\put(105,50){$\centerdot$}

\put(87,6){\vector(2,1){8}}
\put(87,26){\vector(2,1){8}}
\put(87,46){\vector(2,1){8}}

\put(87,15){\vector(2,-1){8}}
\put(87,35){\vector(2,-1){8}}
\put(87,55){\vector(2,-1){8}}

\put(97,10){\vector(1,0){8}}
\put(97,30){\vector(1,0){8}}
\put(97,50){\vector(1,0){8}}

\put(105,11){\vector(-1,2){9}}
\put(105,31){\vector(-1,2){9}}
\put(107,50){\vector(1,0){8}}

\put(95,30){\oval(30,60)}

\end{picture} 
\caption{\small On the left are the arcs of the forest $G$ coming from the vertices of the set ${\cal S}={\cal V}T^F_s$; in the center are the arcs of the tree $T^F_s$ and the set ${\cal D}$ of vertices of the connected component of the induced subgraph $G|_{\cal S}$, including the vertex $s$; on the right are the arcs of the graph $Q=F^G_{\uparrow{\cal D}}$ coming from the vertices of the set ${\cal S}$. } 
\label{r1}
\end{center}
\end{figure}

\begin{theorem}
{\it Let $F\in\tilde{\cal F}^k$ and $s\in{\cal K}_F$, 
then }
\begin{equation}
\mu_{\cal S}^\circ = 
\lambda_{\cal S}^\circ ,  
\label{mel}
\end{equation}
{\it where}  ${\cal S}={\cal V}T^F_s$.
\end{theorem}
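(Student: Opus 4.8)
\emph{Proof idea.} Since inequality (\ref{mll}) already gives $\mu^\circ_{\cal S}\le\lambda^\circ_{\cal S}$, the plan is to prove the reverse bound $\lambda^\circ_{\cal S}\le\mu^\circ_{\cal S}$ by taking a forest that realizes $\mu^\circ_{\cal S}$ and reshaping its trace on ${\cal S}$ into a single tree of ${\cal T}^\circ_{\cal S}$ without raising the weight, the minimality of $F$ being exactly what keeps the reshaping free of charge. Concretely, I would fix $G\in{\cal F}^\circ_{\cal S}$ with $\Upsilon^G_{\cal S}=\mu^\circ_{\cal S}$. As arcs issuing from $\overline{\cal S}$ do not enter the value $\Upsilon^G_{\cal S}$, I may erase them all and declare every vertex of $\overline{\cal S}$ a root; the result is still a forest of ${\cal F}^\circ_{\cal S}$ of the same weight, so I assume from the start that no arc of $G$ comes from $\overline{\cal S}$. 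Let ${\cal D}$ be the vertex set of the connected component of $G|_{\cal S}$ containing $s$. Being a component of the forest $G|_{\cal S}$, it is a tree; let $q\in{\cal D}$ be its root, i.e. the one vertex of ${\cal D}$ whose $G$-arc $(q,r)$ leaves ${\cal S}$, necessarily with $r\in\overline{\cal S}$.

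First I would pass to $Q=F^G_{\uparrow{\cal D}}$. Its being a forest follows from Property~2, applied with $T^F=T^F_s$ and with the tree of $G$ through ${\cal D}$, which under the normalization above is ${\cal D}\cup\{r\}$ rooted at $r$: indeed ${\cal D}\subseteq{\cal V}T^F_s$, no arc of $G$ enters ${\cal D}$, so ${\cal N}^{in}_{\cal D}(G)=\emptyset$, and the sole arc of $G$ leaving ${\cal D}$ ends at $r$, which lies outside $T^F_s$. In $Q$ each vertex of ${\cal S}\setminus{\cal D}$ keeps its $F$-arc (which stays inside ${\cal S}$, as $T^F_s$ is a tree on ${\cal S}$), each vertex of ${\cal D}\setminus\{q\}$ carries its $G$-arc inside ${\cal D}$, and $q$ carries $(q,r)$; hence $(q,r)$ is the only arc of $Q$ leaving ${\cal S}$. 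Since $Q$ is a forest and its restriction to ${\cal S}$ then has $|{\cal S}|-1$ arcs, $Q|_{\cal S}$ is a spanning tree of ${\cal S}$ rooted at $q$, so $Q|_{\cal S}\cup\{(q,r)\}\in{\cal T}^{\circ q}_{\cal S}$. This already yields
\[
\lambda^\circ_{\cal S}\le\Upsilon^{Q}_{\cal S}=\Upsilon^F_{{\cal S}\setminus{\cal D}}+\Upsilon^G_{\cal D}.
\]

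The heart of the argument is then the single inequality $\Upsilon^F_{{\cal S}\setminus{\cal D}}\le\Upsilon^G_{{\cal S}\setminus{\cal D}}$, and this is precisely where the minimality of $F$ must be used. I would consider the complementary replacement $\hat F=F^G_{\uparrow({\cal S}\setminus{\cal D})}$, giving every vertex of ${\cal S}\setminus{\cal D}$ its $G$-arc while ${\cal D}$ and $\overline{\cal S}$ retain their $F$-arcs. Every vertex of ${\cal S}\setminus{\cal D}$ is a non-root of $F$ (the only root inside ${\cal S}$ is $s\in{\cal D}$) and has a $G$-arc, so the arc count is unchanged, $|{\cal A}\hat F|=N-k$; hence, once $\hat F$ is seen to be a forest, it has exactly $k$ trees. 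Acyclicity is a direct check across the three blocks ${\cal D}$, ${\cal S}\setminus{\cal D}$, $\overline{\cal S}$: within ${\cal D}$ and within $\overline{\cal S}$ the arcs are those of $F$, within ${\cal S}\setminus{\cal D}$ they are those of $G$, each a sub-forest, and between blocks every arc follows the one-way chain ${\cal D}\to{\cal S}\setminus{\cal D}\to\overline{\cal S}$, because no arc runs from ${\cal S}\setminus{\cal D}$ back into ${\cal D}$ (distinct components of $G|_{\cal S}$) and the $F$-arcs of $\overline{\cal S}$ stay inside $\overline{\cal S}$. So no cycle can close and $\hat F\in{\cal F}^k$. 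Minimality of $F$ then gives $\Upsilon^{\hat F}\ge\Upsilon^F=\phi^k$, and since $\hat F$ and $F$ differ only in the arcs issuing from ${\cal S}\setminus{\cal D}$, this is exactly $\Upsilon^G_{{\cal S}\setminus{\cal D}}\ge\Upsilon^F_{{\cal S}\setminus{\cal D}}$.

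Putting the pieces together,
\[
\lambda^\circ_{\cal S}\le\Upsilon^F_{{\cal S}\setminus{\cal D}}+\Upsilon^G_{\cal D}\le\Upsilon^G_{{\cal S}\setminus{\cal D}}+\Upsilon^G_{\cal D}=\Upsilon^G_{\cal S}=\mu^\circ_{\cal S},
\]
which together with (\ref{mll}) gives the equality (\ref{mel}). I expect the real work to lie not in this final arithmetic but in the two structural verifications --- that $Q$ and $\hat F$ are genuine forests with the predicted number of components --- for which the arc-replacement Properties~1--2 (or the direct block check above) are tailor-made; once these are in hand, the minimality of $F$ supplies the only substantive inequality for free.
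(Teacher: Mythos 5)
Your proof is correct, and its second half takes a genuinely different route from the paper's. The first half coincides with the paper: there too, $G$ is normalized so that no arcs come from $\overline{\cal S}$, ${\cal D}$ is taken to be the component of $G|_{\cal S}$ containing $s$, the forest $Q=F^G_{\uparrow{\cal D}}$ is formed via Property 2, and one reads off $\lambda^\circ_{\cal S}\le\Upsilon^Q_{\cal S}=\Upsilon^F_{\cal S}-\Upsilon^F_{\cal D}+\Upsilon^G_{\cal D}$, which is your $\Upsilon^F_{{\cal S}\setminus{\cal D}}+\Upsilon^G_{\cal D}$. The divergence is in how minimality of $F$ is brought to bear. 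The paper forms the reverse swap $R=G^F_{\uparrow{\cal D}}$ (a forest, again by Property 2), observes that in $R$ arcs issue from every vertex of ${\cal S}$ except $s$, so $\Upsilon^R_{\cal S}\ge\mu^\bullet_{\cal S}$, and then closes the chain by invoking Property 3 ($\mu^\bullet_{\cal S}=\lambda^\bullet_{\cal S}=\Upsilon^F_{\cal S}$, imported from \cite{V8}); that imported proposition is where the minimality of $F$ is hidden. You instead prove the needed inequality $\Upsilon^F_{{\cal S}\setminus{\cal D}}\le\Upsilon^G_{{\cal S}\setminus{\cal D}}$ from scratch: your $\hat F=F^G_{\uparrow({\cal S}\setminus{\cal D})}$ agrees with the paper's $R$ on all arcs issuing from ${\cal S}$ and differs only over $\overline{\cal S}$, where you deliberately keep $F$'s arcs so that $\hat F$ has exactly $N-k$ arcs and, once acyclicity is checked (your three-block argument is sound), lies in ${\cal F}^k$; then $\Upsilon^{\hat F}\ge\phi^k=\Upsilon^F$ gives the bound directly. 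So both arguments ultimately establish the same inequality, but yours bypasses the $\mu^\bullet$/$\lambda^\bullet$ machinery of \cite{V8} entirely, at the cost of a hands-on structural verification: the paper's route buys brevity given the properties already catalogued, yours buys self-containedness (only Property 2, or your direct block check, plus the bare definition of $\phi^k$ are used). One small inaccuracy, which does not affect the argument: under your normalization the tree of $G$ containing ${\cal D}$ need not equal ${\cal D}\cup\{r\}$, since roots of other components of $G|_{\cal S}$ may also send their arcs to $r$; but Property 2 only needs ${\cal N}^{in}_{\cal D}(G)=\emptyset$ and $r\in{\cal V}T^G\setminus{\cal V}T^F_s$, both of which hold regardless.
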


\begin{proof}
First of all, we note that if $\mu^\circ_{\cal S}=\infty$, then by (\ref{mll}) $\lambda_{\cal S}^\circ=\infty$. Therefore, we consider only the situation when $\mu^\circ_{\cal S}<\infty$. Let the spanning forest $G\in{\cal F}^\circ_{\cal S}$ be such that $\Upsilon^{G}_{\cal S}=\mu^\circ_{\cal S}$. For convenience and without loss of generality, we assume that no arcs come from the vertices of the set $\overline{\cal S}$ ($\Upsilon^{G}=\Upsilon^{G}_{\cal S}$). 

The induced subgraph $G|_{\cal S}$ is a forest on the vertex set ${\cal S}$. The roots of its trees are those vertices of the set ${\cal S}$ from which arcs in the graph $G$ itself originate and enter $\overline{\cal S}$ (in Fig. \ref{r1} these are vertices $a$, $b$, and $c$). Let ${\cal D}$ be the set of vertices of the tree of the induced forest $G|_{\cal S}$ that contains the root $s$ of the forest $F$, and let $T^G$ be the tree of the forest $G$ that contains ${\cal D}$.

Let us introduce graphs $Q=F^G_{\uparrow{\cal D}}$ and $R=G^F_{\uparrow{\cal D}}$. The set ${\cal D}$ satisfies the conditions of Property 2, so both graphs $Q$ and $R$ are forests. Note that $Q\in{\cal R}^F_s$. 

In the forest $Q$, arcs come from all vertices of the set ${\cal S}$, and exactly one arc comes from the set ${\cal S}$ itself (in Fig.\ref{r1} on the right, it comes from vertex $a$). Thus, $Q|_{\cal S}$ is a tree with a root at $a$. If we supplement it with an arc coming from $a$ in the forest $Q$, we obtain a tree belonging to the set ${\cal T}^\circ_{\cal S}$. Thus $\Upsilon_{\cal S}^Q\geq \lambda^\circ_{\cal S}$. We have

\begin{equation}
\lambda^\circ_{\cal S}\leq \Upsilon^Q_{\cal S}=\Upsilon^F_{\cal S}-\Upsilon^F_{\cal D}+\Upsilon^G_{\cal D}=
\lambda^\bullet_{\cal S} -\Upsilon^F_{\cal D}+\Upsilon^G_{\cal D} \ .
\label{llu}
\end{equation}
In the forest $R$ arcs originate from all the vertices of the set ${\cal S}$ except one (this is the vertex $s$), so $\Upsilon_{\cal S}^R\ge\mu^\bullet_{\cal S}$ is executed.
On the other hand, considering (\ref{mlu})
\begin{equation}
\lambda^\bullet_{\cal S}=\mu^\bullet_{\cal S}\leq \Upsilon^R_{\cal S}=\Upsilon^G_{\cal S}-\Upsilon^G_{\cal D}+\Upsilon^F_{\cal D}=
\mu^\circ_{\cal S} -\Upsilon^G_{\cal D}+\Upsilon^F_{\cal D} \ .
\label{lmu}
\end{equation}
Substituting $\lambda^\bullet_{\cal S}$ from (\ref{lmu}) into (\ref{llu}), we obtain $\lambda^\circ_{\cal S}\leq \mu^\circ_{\cal S} $, and by (\ref{mll}) the reverse inequality holds.
\end{proof}

\begin{proposition} Let ${\cal S}\subsetneq {\cal N}$ and ${\cal T}^\circ_{\cal S}\neq\emptyset$, then   
\begin{equation}
\lambda_{\cal S}^\circ =\min_{q\in {\cal S} }\left(\lambda_{\cal S}^{\bullet q} + \min_{r\notin{\cal S}}v_{qr}\right) \ . 
\label{lo}
\end{equation}
 \end{proposition}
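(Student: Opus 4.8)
The plan is to reduce everything to a single-root identity and then minimize over the root. Concretely, for each fixed $q\in{\cal S}$ I would first prove
\[
\lambda_{\cal S}^{\circ q} = \lambda_{\cal S}^{\bullet q} + \min_{r\notin{\cal S}} v_{qr},
\]
and then pass to the minimum over $q\in{\cal S}$ using (\ref{cm}), which immediately yields (\ref{lo}). The single-root identity is essentially forced by the way ${\cal T}^{\circ q}_{\cal S}$ was defined: every $T\in{\cal T}^{\circ q}_{\cal S}$ carries exactly one arc leaving ${\cal S}$, namely an arc $(q,r)$ with $r\in\overline{\cal S}$, and its restriction $T_q=T|_{\cal S}$ lies in ${\cal T}^{\bullet q}_{\cal S}$; by (\ref{U}) the weight then splits as $\Upsilon^T=\Upsilon^{T_q}+v_{qr}$.

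The key step I would carry out is to turn this splitting into a genuine bijection. Given $T\in{\cal T}^{\circ q}_{\cal S}$, the pair $(T_q,(q,r))$ is determined, with $T_q\in{\cal T}^{\bullet q}_{\cal S}$ and $(q,r)\in{\cal A}V$, $r\notin{\cal S}$. Conversely, starting from any $T_q\in{\cal T}^{\bullet q}_{\cal S}$ and any arc $(q,r)\in{\cal A}V$ with $r\notin{\cal S}$, I would check that adjoining $(q,r)$ to $T_q$ produces a graph on ${\cal S}\cup\{r\}$ that is connected and acyclic --- the vertex $r$ is new and enters only as the head of the single outgoing arc of the former root $q$, so it becomes the root of the enlarged tree --- and hence a member of ${\cal T}^{\circ q}_{\cal S}$. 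The content of this verification is precisely that the two choices, the spanning tree $T_q$ on ${\cal S}$ and the exit arc $(q,r)$, are independent of one another.

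Given independence, the minimum over ${\cal T}^{\circ q}_{\cal S}$ separates across the two summands of (\ref{U}):
\[
\lambda_{\cal S}^{\circ q}=\min_{T\in{\cal T}^{\circ q}_{\cal S}}\Upsilon^T
=\min_{T_q\in{\cal T}^{\bullet q}_{\cal S}}\Upsilon^{T_q}+\min_{r\notin{\cal S}}v_{qr}
=\lambda_{\cal S}^{\bullet q}+\min_{r\notin{\cal S}}v_{qr},
\]
and a final minimization over $q$ via (\ref{cm}) completes the proof. I do not expect a genuine obstacle here; the only delicate point is the bookkeeping of infinite values, which I would dispatch by noting that if $\lambda_{\cal S}^{\bullet q}=\infty$ or if $q$ sends no arc into $\overline{\cal S}$, then ${\cal T}^{\circ q}_{\cal S}=\emptyset$ and $\lambda_{\cal S}^{\circ q}=\infty$, consistent with the convention $\infty+c=\infty$. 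The hypotheses ${\cal S}\subsetneq{\cal N}$ and ${\cal T}^\circ_{\cal S}\neq\emptyset$ serve only to guarantee that at least one term on the right-hand side of (\ref{lo}) is finite, so the outer minimum is attained and nonvacuous.
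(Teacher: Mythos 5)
Your proof is correct and follows essentially the same route as the paper: establish the single-root identity $\lambda_{\cal S}^{\circ q}=\lambda_{\cal S}^{\bullet q}+\min_{r\notin{\cal S}}v_{qr}$ by splitting $\Upsilon^T=\Upsilon^{T_q}+v_{qr}$ and separating the minimum, then minimize over $q$ via (\ref{cm}). Your explicit verification that any pair $(T_q,(q,r))$ reassembles into a member of ${\cal T}^{\circ q}_{\cal S}$ is exactly the independence that the paper's proof uses implicitly when it rewrites $\lambda_{\cal S}^{\circ q}$ as a joint minimum over $T_q$ and $r$, so it is a welcome (but not divergent) addition.
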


\begin{proof}
Let $q\in{\cal S}$. By definition, if $T\in {\cal T}_{\cal S}^{\circ q}$, then its subgraph $T|_{\cal S}$ induced by the set $\cal S$ is a tree. 
Let $T_q=T|_{\cal S}$. Then
\begin{equation}
\Upsilon^{T}=\Upsilon^{T_q}+v_{qr}, 
\label{Ut}
\end{equation}
where $(q,r)\in {\cal A}T$. Since $T_q$ is a tree, this arc is the only one coming from $q$ at $T$. Moreover, $r\notin{\cal S}$ is the root of the tree $T$, and $T_q\in{\cal T}^{\bullet q}_{\cal S}$. Let us determine the minimum of (\ref{Ut}) for a fixed $q$.

\begin{equation}
\lambda_{\cal S}^{\circ q} =  \min_{T\in{\cal T}_{\cal S}^{\circ q}}\Upsilon^{T} =\min_{\substack{ r\notin {\cal S} \\ T_q \in{\cal T}^{\bullet q}_{\cal S} }} \left(\Upsilon^{T_q} + v_{qr}\right) \ .
\end{equation} 

Since $r\notin {\cal S}$ and this vertex is not related to the weight of the tree $T_q$, the expression for the minimum takes the form
\begin{equation*}
\lambda_{\cal S}^{\circ q} =
 \min_{T_q\in{\cal T}^{\bullet q}_{\cal S}} \Upsilon^{T_q} + \min_{r\notin{\cal S}}v_{qr}  \ .
\end{equation*}
Given the definition (\ref{bt}), we obtain
 
\begin{equation}
\lambda_{\cal S}^{\circ q}=\lambda_{\cal S}^{\bullet q} + \min_{r\notin{\cal S}}v_{qr} \ . 
\label{loq}
\end{equation}
 In turn, (\ref{cm}) now implies (\ref{lo}). 
\end{proof}

\section{Relatedness of forests}

The main theorem \cite[Theorem 2 (on related forests)]{V6} we formulate as a property

\begin{property} {\it Let ${\cal F}^{k-1}\neq \emptyset$, then any forest from $\tilde{\cal F}^{k}$ has a descendant in $\tilde{\cal F}^{k-1}$ and vice versa --- any forest from $\tilde{\cal F}^{k-1}$ has an ancestor in $\tilde{\cal F}^{k}$.} 
\end{property}

In \cite[Theorem 2]{V8} a criterion is formulated for a pseudo-descendant of a minimal forest to also be minimal, and on this basis an efficient algorithm for constructing minimal pseudo-sibling forests is presented. We present the formulation in the form of a property.

\begin{property}
Let $F\in \tilde{\cal F}^{k}$, $j\in{\cal K}_F$ and $G\in{\cal P}^F_j$. Then in order for $G\in \tilde{\cal F}^{k-1}$ it is necessary and sufficient that  
\begin{equation}
\Upsilon^G_{{\cal V}T^F_j}=\mu^\circ_{{\cal V}T^F_j}
\label{ul}
\end{equation}
 and
\begin{equation}
\mu^\circ_{{\cal V}T^F_j}- 
\mu^\bullet_{{\cal V}T^F_j} = \min_{l\in{\cal K}_F} \left( \mu^\circ_{{\cal V}T^F_l}- 
\mu^\bullet_{{\cal V}T^F_l}  \right)  \ . 
\label{potom}
\end{equation}
\end{property}

Let us prove a similar criterion for related forests.

\begin{theorem}
Let $F\in \tilde{\cal F}^{k}$, $y\in{\cal K}_F$ and $G\in{\cal R}^F_y$. In order for $G\in \tilde{\cal F}^{k-1}$, it is necessary and sufficient that  
\begin{equation}
\Upsilon^G_{{\cal V}T^F_y}=\lambda^\circ_{{\cal V}T^F_y}
\label{ull}
\end{equation}
 and
\begin{equation}
\lambda^\circ_{{\cal V}T^F_y}- 
\lambda^\bullet_{{\cal V}T^F_y} = \min_{l\in{\cal K}_F} \left( \lambda^\circ_{{\cal V}T^F_l}- 
\lambda^\bullet_{{\cal V}T^F_l}  \right)  \ . 
\label{potoml}
\end{equation}
\end{theorem}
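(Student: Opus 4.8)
The plan is to reduce the statement to the already-established pseudo-descendant criterion (Property 5) by exploiting that a related forest is in particular a pseudo-related one. By definition $G\in{\cal R}^F_y$ means $G\in{\cal P}^F_y$ together with the extra requirement that $G|_{{\cal V}T^F_y}$ be a tree. Hence Property 5 applies verbatim with $j=y$: it asserts that $G\in\tilde{\cal F}^{k-1}$ if and only if the $\mu$-conditions (\ref{ul}) and (\ref{potom}) hold for ${\cal S}={\cal V}T^F_y$. It therefore suffices to show that, under the hypothesis $F\in\tilde{\cal F}^k$, the pair (\ref{ul})--(\ref{potom}) is equivalent to the pair (\ref{ull})--(\ref{potoml}).

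First I would record that the bridging identities hold on every tree of $F$ simultaneously. Fix $l\in{\cal K}_F$ and set ${\cal S}_l={\cal V}T^F_l$. Since $F\in\tilde{\cal F}^k$ and $l$ is a root of $F$, the set ${\cal S}_l$ is the vertex set of a tree of a minimal forest, so Theorem 1 (applied with $s=l$) gives $\mu^\circ_{{\cal S}_l}=\lambda^\circ_{{\cal S}_l}$, while Property 3 (applied with the root $l$) gives $\mu^\bullet_{{\cal S}_l}=\lambda^\bullet_{{\cal S}_l}$. Consequently $\mu^\circ_{{\cal S}_l}-\mu^\bullet_{{\cal S}_l}=\lambda^\circ_{{\cal S}_l}-\lambda^\bullet_{{\cal S}_l}$ for every $l\in{\cal K}_F$.

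With these term-by-term equalities in hand the translation is immediate. Taking $l=y$ in $\mu^\circ_{{\cal S}_y}=\lambda^\circ_{{\cal S}_y}$ shows that condition (\ref{ul}), namely $\Upsilon^G_{{\cal V}T^F_y}=\mu^\circ_{{\cal V}T^F_y}$, is the same as condition (\ref{ull}). Since the two differences coincide for each $l$, the minima over $l\in{\cal K}_F$ on the right-hand sides of (\ref{potom}) and (\ref{potoml}) are equal and are attained at the same roots; hence (\ref{potom}) holds precisely when (\ref{potoml}) does. Combining these equivalences with Property 5 yields the desired characterization.

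I expect the only point requiring genuine care to be the uniformity of the substitution: Theorem 1 and Property 3 must be invoked not merely for the distinguished root $y$ but for every $l\in{\cal K}_F$ occurring under the minimum in (\ref{potom}) and (\ref{potoml}), which is legitimate precisely because each ${\cal V}T^F_l$ is a tree of the single minimal forest $F$. As a consistency check one may verify directly that, since $G|_{{\cal V}T^F_y}$ is a tree from which a single arc leaves ${\cal V}T^F_y$, this induced tree together with its outgoing arc belongs to ${\cal T}^\circ_{{\cal V}T^F_y}$, so that $\Upsilon^G_{{\cal V}T^F_y}\ge\lambda^\circ_{{\cal V}T^F_y}$; this explains why (\ref{ull}) is the natural equality condition for a related, as opposed to a merely pseudo-related, forest. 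A more self-contained alternative would decompose $\Upsilon^G=\phi^k-\lambda^\bullet_{{\cal V}T^F_y}+\Upsilon^G_{{\cal V}T^F_y}$ and minimise over descendants with the help of Property 4, but the reduction to Property 5 is shorter.
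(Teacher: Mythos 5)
Your proposal is correct and follows essentially the same route as the paper's own proof: both establish the bridging identities $\mu^\bullet_{{\cal V}T^F_l}=\lambda^\bullet_{{\cal V}T^F_l}$ and $\mu^\circ_{{\cal V}T^F_l}=\lambda^\circ_{{\cal V}T^F_l}$ for every $l\in{\cal K}_F$ via Property 3 and Theorem 1, and then reduce the statement to the pseudo-descendant criterion of Property 5. Your packaging as a single term-by-term equivalence (rather than the paper's two separate directions) is a harmless stylistic difference.
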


\begin{proof}
 By Property 3 and Theorem 1, for all trees in forest $F$,
\begin{equation}
\lambda^\bullet_{{\cal V}T^F_l}=\mu^\bullet_{{\cal V}T^F_l} \ , \ \ \   
\lambda^\circ_{{\cal V}T^F_l}=\mu^\circ_{{\cal V}T^F_l} \ , \ \ \ l\in{\cal K}_F \ .
\end{equation}
Forest $G$ is a descendant of forest $F$, and in particular is a pseudo-descendant. If forest $G$ and root $y$ satisfy conditions (\ref{ull}) and (\ref{potoml}), then forest $G$ also satisfies conditions (\ref{ul}) and (\ref{potom}). Thus, by Property 5, forest $G$ is a minimal pseudo-descendant of forest $F$. The weights of all minimal forests with a fixed number of roots coincide. So the forest $G$ is a minimal descendant. In the opposite direction. Let $G$ be a minimal descendant and $G\in{\cal R}^F_j$. Then it is a minimal pseudo-descendant. By Property 4, conditions (\ref{ul}) and (\ref{potom}) are satisfied for it. That is, conditions (\ref{ull}) and (\ref{potoml}).  
\end{proof}
 
\begin{corollary}
Let $F\in\tilde{\cal F}^{k}$, then
\begin{equation}
\phi^{k-1}-\phi^{k}=\min_{l\in{\cal K}_F} \left( \lambda^\circ_{{\cal V}T^F_l}- 
\lambda^\bullet_{{\cal V}T^F_l}  \right) \ . 
\label{vyp}
\end{equation}
\end{corollary}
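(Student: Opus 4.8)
The plan is to treat this statement as what its name suggests — a direct corollary of Theorem 2 — so that the only genuine work is a bookkeeping identity that splits the weight of a descendant over the trees of $F$. First I would record two weight identities tied to $F$. Since $F\in\tilde{\cal F}^{k}$, Property 3 applied to each tree gives $\lambda^\bullet_{{\cal V}T^F_l}=\Upsilon^F_{{\cal V}T^F_l}$ for every $l\in{\cal K}_F$. The vertex sets ${\cal V}T^F_l$, $l\in{\cal K}_F$, partition ${\cal N}$, and by the definition (\ref{ves}) the quantity $\Upsilon^H_{\cal S}$ counts exactly the arcs of $H$ whose tail (outcome) lies in ${\cal S}$; hence for any spanning subgraph $H$ one has $\sum_{l\in{\cal K}_F}\Upsilon^H_{{\cal V}T^F_l}=\Upsilon^H$. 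In particular $\sum_{l}\lambda^\bullet_{{\cal V}T^F_l}=\sum_l\Upsilon^F_{{\cal V}T^F_l}=\Upsilon^F=\phi^k$.

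Next I would establish the weight decomposition for a descendant. Let $G\in{\cal R}^F_y$. For every $l\neq y$ we have $G|_{{\cal V}T^F_l}=T^F_l$ and $l\in{\cal K}_G$, so no arc of $G$ leaves ${\cal V}T^F_l$; therefore $\Upsilon^G_{{\cal V}T^F_l}=\Upsilon^{T^F_l}=\Upsilon^F_{{\cal V}T^F_l}=\lambda^\bullet_{{\cal V}T^F_l}$. Combining this with the partition identity of the previous step yields
\[
\Upsilon^G=\Upsilon^G_{{\cal V}T^F_y}+\sum_{l\neq y}\lambda^\bullet_{{\cal V}T^F_l}
=\Upsilon^G_{{\cal V}T^F_y}-\lambda^\bullet_{{\cal V}T^F_y}+\phi^k .
\]
This is the one place where care is needed: the contribution of each unaltered tree must be counted by tail, not by head, and one must check that the merged tree $T^F_y$ contributes precisely $\Upsilon^G_{{\cal V}T^F_y}$; I do not expect any real obstacle here beyond that bookkeeping.

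Then I would invoke existence together with the criterion. Assuming ${\cal F}^{k-1}\neq\emptyset$, Property 4 guarantees that $F\in\tilde{\cal F}^{k}$ has a descendant $G\in\tilde{\cal F}^{k-1}$, so $\Upsilon^G=\phi^{k-1}$; fix $y$ with $G\in{\cal R}^F_y$. Since $G$ is a minimal descendant, the necessity direction of Theorem 2 supplies conditions (\ref{ull}) and (\ref{potoml}), that is $\Upsilon^G_{{\cal V}T^F_y}=\lambda^\circ_{{\cal V}T^F_y}$ and $\lambda^\circ_{{\cal V}T^F_y}-\lambda^\bullet_{{\cal V}T^F_y}=\min_{l\in{\cal K}_F}(\lambda^\circ_{{\cal V}T^F_l}-\lambda^\bullet_{{\cal V}T^F_l})$. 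Substituting both into the displayed decomposition gives
\[
\phi^{k-1}=\lambda^\circ_{{\cal V}T^F_y}-\lambda^\bullet_{{\cal V}T^F_y}+\phi^k
=\phi^k+\min_{l\in{\cal K}_F}\left(\lambda^\circ_{{\cal V}T^F_l}-\lambda^\bullet_{{\cal V}T^F_l}\right),
\]
which is exactly (\ref{vyp}).

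Finally I would dispose of the degenerate case ${\cal F}^{k-1}=\emptyset$, where $\phi^{k-1}=\infty$ while $\phi^k$ is finite. Here I would argue that if some $\lambda^\circ_{{\cal V}T^F_l}$ were finite, a tree in ${\cal T}^\circ_{{\cal V}T^F_l}$ realizing it could be glued, along its unique outgoing arc $(q,r)$ with $r\in{\cal V}T^F_m$ for some $m\neq l$, to the remaining unchanged trees $T^F_m$, producing an entering spanning forest with $k-1$ roots — contradicting ${\cal F}^{k-1}=\emptyset$. Hence every $\lambda^\circ_{{\cal V}T^F_l}=\infty$, the right-hand side of (\ref{vyp}) equals $\infty$, and the identity again holds.
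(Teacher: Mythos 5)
Your proposal is correct and follows essentially the same route as the paper: take a minimal descendant $G\in\tilde{\cal F}^{k-1}\cap{\cal R}^F_y$ of $F$ (Property 4), apply the necessity direction of Theorem 2 to obtain (\ref{ull}) and (\ref{potoml}), and use the fact that $F$ and $G$ differ only in the arcs coming from ${\cal V}T^F_y$ to get $\phi^{k-1}-\phi^{k}=\Upsilon^G_{{\cal V}T^F_y}-\Upsilon^F_{{\cal V}T^F_y}=\lambda^\circ_{{\cal V}T^F_y}-\lambda^\bullet_{{\cal V}T^F_y}$. Your explicit tree-by-tree decomposition of $\Upsilon^G$ and your separate treatment of the degenerate case ${\cal F}^{k-1}=\emptyset$ (which the paper passes over silently) are just more careful renderings of that same argument, not a different method.
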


\begin{proof}
Let $G\in \tilde{\cal F}^{k-1}$ be a descendant of forest $F$ and $\{ y\} ={\cal K}_F\setminus{\cal K}_G$. That is, $y$ is the root on which the minimum on the right-hand side of (\ref{vyp}) is reached, that is, (\ref{potoml}) is satisfied. We have: $\phi^{k}=\Upsilon^F$, $\phi^{k-1}=\Upsilon^G$. Forests $F$ and $G$ differ only in the arcs coming from the vertices of the set ${\cal V}T^F_y$. Thus, taking into account (\ref{ull}) 

\begin{equation}
\phi^{k-1}-\phi^{k}=\Upsilon^G-\Upsilon^F= \Upsilon^G_{{\cal V}T^F_y}-\Upsilon^F_{{\cal V}T^F_y}= \lambda_{{\cal V}T^F_y}^\circ-\lambda_{{\cal V}T^F_y}^\bullet  \ . 
\end{equation}
Taking into account (\ref{potoml}), we obtain (\ref{vyp}). 
\end{proof}

\section{Recalculation of minimum weights when enlarging trees} 

\begin{proposition}
Let $F\in\tilde{\cal F}^{k}$, $ y \in{\cal K}_F$, $G\in\tilde{\cal F}^{k-1}\cap {\cal R}_y^F$. Let also the entry of the only arc coming from the set ${\cal V}T_y^F$ at $G$ belong to the set ${\cal V}T^F_x$. Then

\begin{equation}
\lambda_{{\cal V}T^G_x}^\bullet=\lambda_{{\cal V}T^F_x}^\bullet+\lambda_{{\cal V}T^F_y}^\circ \ .
\label{pere}
\end{equation}
  \end{proposition}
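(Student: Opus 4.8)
The plan is to prove the identity by a single short chain of equalities, reading each term off a result already established. Write $\mathcal{A}=\mathcal{V}T^F_x$ and $\mathcal{B}=\mathcal{V}T^F_y$; since $x\neq y$, these are the (disjoint) vertex sets of two distinct trees of $F$. The first step is to identify the merged tree of $G$. Because $G\in\mathcal{R}^F_y\subseteq\mathcal{P}^F_y$, we have $\mathcal{K}_G=\mathcal{K}_F\setminus\{y\}$ and $G|_{\mathcal{V}T^F_q}=T^F_q$ for every $q\in\mathcal{K}_G$; the single arc leaving $\mathcal{B}$ in $G$ enters $\mathcal{A}$ by hypothesis, while no new arc enters any other $T^F_q$. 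Hence the only tree of $G$ whose vertex set changes is the one rooted at $x$, and $\mathcal{V}T^G_x=\mathcal{A}\cup\mathcal{B}$.

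Next I would apply Property 3 to the minimal forest $G\in\tilde{\mathcal{F}}^{k-1}$ with root $x\in\mathcal{K}_G$: by (\ref{mlu}) this gives $\lambda^\bullet_{\mathcal{V}T^G_x}=\Upsilon^G_{\mathcal{V}T^G_x}$. Because $\mathcal{A}$ and $\mathcal{B}$ are disjoint, the weight splits additively, $\Upsilon^G_{\mathcal{V}T^G_x}=\Upsilon^G_{\mathcal{A}}+\Upsilon^G_{\mathcal{B}}$, so it only remains to evaluate the two summands. For the first, I observe that in $G$ every vertex of $\mathcal{A}$ keeps its arc inside $\mathcal{A}$ (the root $x$ emits none, and for the remaining vertices $G|_{\mathcal{A}}=T^F_x$), so no arc comes from $\mathcal{A}$ and therefore $\Upsilon^G_{\mathcal{A}}=\Upsilon^{T^F_x}=\Upsilon^F_{\mathcal{A}}$; applying Property 3 to $F$ with root $x$ converts this into $\Upsilon^G_{\mathcal{A}}=\lambda^\bullet_{\mathcal{V}T^F_x}$.

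For the second summand I would invoke the relatedness criterion just proved, Theorem 2: since $G\in\tilde{\mathcal{F}}^{k-1}\cap\mathcal{R}^F_y$, condition (\ref{ull}) holds, i.e.\ $\Upsilon^G_{\mathcal{B}}=\lambda^\circ_{\mathcal{V}T^F_y}$. Substituting both evaluated summands into the additive split yields $\lambda^\bullet_{\mathcal{V}T^G_x}=\lambda^\bullet_{\mathcal{V}T^F_x}+\lambda^\circ_{\mathcal{V}T^F_y}$, which is exactly (\ref{pere}). There is no deep obstacle once the structure is set up; the only point requiring care is the bookkeeping of which arcs have their outcome in $\mathcal{A}$ versus $\mathcal{B}$ in $G$, so that the split $\Upsilon^G_{\mathcal{A}}+\Upsilon^G_{\mathcal{B}}$ is valid and the exit arc $(q,r)$ is counted exactly once (its outcome $q$ lies in $\mathcal{B}$, so it enters $\Upsilon^G_{\mathcal{B}}$), together with the verification that every arc with outcome in $\mathcal{A}$ stays inside $\mathcal{A}$ so that $\Upsilon^G_{\mathcal{A}}=\Upsilon^F_{\mathcal{A}}$.
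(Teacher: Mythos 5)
Your proof is correct and follows essentially the same route as the paper's: identify ${\cal V}T^G_x={\cal V}T^F_x\cup{\cal V}T^F_y$, apply Property 3 to the minimal forest $G$ to get $\lambda^\bullet_{{\cal V}T^G_x}=\Upsilon^G_{{\cal V}T^G_x}$, split this weight additively over the two disjoint sets, and evaluate the pieces as $\lambda^\bullet_{{\cal V}T^F_x}$ and $\lambda^\circ_{{\cal V}T^F_y}$. The only cosmetic difference is that you invoke condition (\ref{ull}) of Theorem 2 for the second summand, whereas the paper cites the pseudo-descendant condition (\ref{ul}) together with the equality $\mu^\circ=\lambda^\circ$ of Theorem 1; these are interchangeable here.
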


\begin{proof}
First of all, we note that in the forest $G$ all trees except the tree rooted at $x$ are the same as in the forest $F$ and nothing has changed for the weight minima on their vertex sets. Since the forest $G$ itself is minimal, then by Property 3 for the vertex set ${\cal S}$ of any tree in it, $\lambda_{\cal S}^\bullet =\Upsilon^G_{\cal S}$ holds.
The only tree in $G$ with a vertex set different from that in $F$ is the tree $T^G_x$. Since $G\in{\cal R}^F_y$, then ${\cal V}T^G_x={\cal V}T^F_x\cup{\cal V}T^F_y$ and $G|_{{\cal V}T^F_x}={\cal V}T^F_x$. Taking into account (\ref{ul}), we have:

\begin{equation*}
\lambda_{{\cal V}T^G_x}^\bullet =\Upsilon^G_{{\cal V}T^G_x}=\Upsilon^G_{{\cal V}T^F_x}+\Upsilon^G_{{\cal V}T^F_y}=\Upsilon^F_{{\cal V}T^F_x}+\Upsilon^G_{{\cal V}T^F_y}=\lambda_{{\cal V}T^F_x}^\bullet+\lambda_{{\cal V}T^F_y}^\circ \ .
\end{equation*}
\end{proof}

Propositions 1 and 2 together with Theorem 2 allow us to algorithmically construct related minimal forests. 
However, direct use of (\ref{lo}) in finding the values of   $\lambda^\circ_{\cal S}$ (and the corresponding trees) is not effective, since it is required to sequentially assign each vertex $q$ of the set ${\cal S}={\cal V}T^F_y$ as a root.
Property 5 allows us to avoid this procedure and define more convenient for calculation minima $\mu^\circ_{\cal S}$ and the corresponding forests \cite{V8}.
However, there is an important, especially in the physics of Brownian motion in a potential drift field (\cite{V1}, \cite{V2}), special case of weight function, when it is more efficient to calculate the values  $\lambda^\circ_{\cal S}$ and find the trees corresponding to them. 

\section{Digraph of (potential) barriers and graph of potential} 

\subsection{Clarification of the use of terminology}

Further, both directed and undirected graphs will be used, which means that each time it is necessary to stipulate whether an directed graph or a graph (undirected) is used. Therefore, now the widespread use of the term graph (as well as the terms forest and tree) becomes inconvenient and can lead to confusion. The term graph itself now implies an undirected graph.  
Everything that was previously called forests and trees, in the strict sense, are directed forests and directed trees, and entering ones at that. Unless specifically noted or clear from the context, the terms forest and tree now refer to undirected graphs. Namely, a forest is an acyclic graph, and a connected component of a forest is a tree.

\subsection{Notations and definitions for undirected graphs}

Undirected graphs will be denoted by a sans-serif font. Along with the original weighted directed graph $V$, there will be an associated undirected graph {\sf P} with loops on the same set of vertices: ${\cal V}{\sf P}={\cal N}$.
The set of edges (and loops) is denoted by ${\cal E}{\sf P}$. Depending on the context, the pair $(i,j)$, for $i\neq j$, can be considered both as an arc of the directed graph $V$ and as an edge of the graph {\sf P}. For the graph {\sf P}, the pairs $(i,j)$ and $(j,i)$ define the same edge with weight $p_{ij}$.

For a subgraph {\sf G} of a graph {\sf P} and a subset of the vertex set ${\cal S}\subseteq {\cal N}$, we introduce weights
\begin{equation}
\Upsilon^{\sf G}_{\cal S}=\sum_{\begin{smallmatrix}i,j\in{\cal S} \\ (i,j)\in {\cal E}{\sf G}\end{smallmatrix}} p_{ij} \ , \ \ \Upsilon^{\sf G}=\Upsilon^{\sf G}_{\cal N}=\sum_{(i,j)\in {\cal E}{\sf G}} p_{ij} \ . \label{vesp}
\end{equation}
Here, in contrast to the oriented situation (\ref{ves}), always $\Upsilon^{\sf G}_{\cal S}=\Upsilon^{{\sf G}|_{\cal S}} $, but there is no additivity property.

The sets of spanning forests of the graph {\sf P} consisting of $k$ trees are denoted by ${\textsc F}^k$. Forests {\sf F} from ${\textsc F}^k$ on which the minimum of weight $\Upsilon^{\sf F}$ is achieved are called minimal. The set of such forests is denoted by $\tilde{\textsc F}^k$.

Similarly, if ${\cal S}\subseteq {\cal N}$, then by ${\textsc T}_{\cal S}$ we mean the set of trees {\sf T} that are subgraphs of {\sf P} such that ${\cal V}{\sf T}={\cal S}$. Trees ${\sf T}\in {\textsc T}_{\cal S}$ on which the minimum of weight $\Upsilon^{\sf T}$ is achieved are called minimal on the set ${\cal S}$. The set of such trees is denoted by $\tilde{\textsc T}_{\cal S}$, and the minimum of weight itself is denoted by $\nu_{\cal S}$:

\begin{equation}
\nu_{\cal S}=\min_{{\sf T}\in{\textsc
T}_{\cal S}}\Upsilon^{\sf T} \ .  
\label{nu}
\end{equation}

\subsection{Relationship between the barrier graph $V$ and the potential graph {\sf P}}

\begin{figure}[h]
\unitlength=1mm
\begin{center}
\begin{picture}(120,85)
\qbezier(0,82)(7,84)(15,45)
\qbezier(15,45)(28,-20)(41,45)
\qbezier(41,45)(50,88)(63,47)
\qbezier(63,47)(71, 23)(79,45)
\qbezier(79,45)(87, 63)(93,45)
\qbezier(93,45)(105, 5)(110,45)
\qbezier(110,45)(115, 84)(120,82)

\put(0,3){\vector(0,1){85}}
\put(0,3){\vector(1,0){120}}
\put(2,85){$P(x)$}

\put(27,0){$x_i$}
\put(26,39){$v_{ij}$}
\put(28,42){\vector(0,1){25}}
\put(28,37){\vector(0,-1){24}}

\put(49,0){$x_{ij}$}
\put(69,0){$x_j$}
\put(85,0){$x_{jk}$}
\put(102,0){$x_k$}

\put(0,67){\line(1,0){75}}
\put(0,12){\line(1,0){30}}
\put(0,34){\line(1,0){75}}

\put(70,50){$v_{ji}$}
\put(71,53){\vector(0,1){14}}
\put(71,49){\vector(0,-1){14}}

\put(1,14){$p_{ii}$}
\put(1,36){$p_{jj}$}
\put(1,69){$p_{ij}$}

\end{picture}
\caption{\small Construction of an undirected graph {\sf P} and an directed graph $V$ based on the potential $P(x)$. }
\label{pot}
\end{center}
\end{figure}
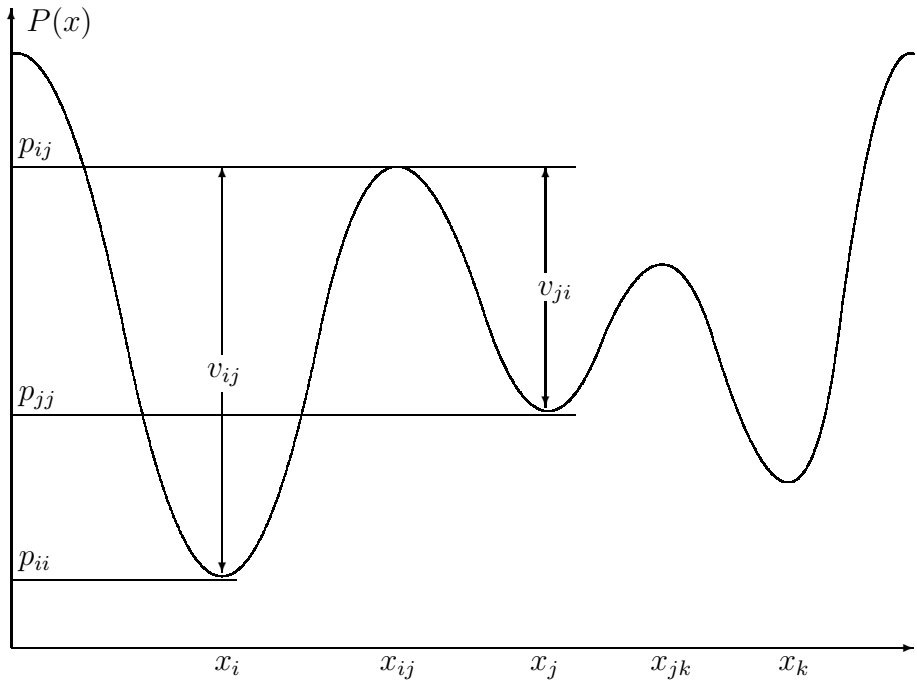

Let {\sf P} be a reflexive (each vertex is incident to a loop) connected weighted undirected graph. We will call such a graph a {\it potential graph}.

On the same vertex set, each potential graph can be assigned a uniquely weighted directed graph $V$ without loops according to the rule: arc $(i,j)\in{\cal A}V$, $i\neq j$, if there is an edge $(i,j)\in {\cal E}{\sf P}$. The weight of each arc is equal to

\begin{equation}
v_{ij}=p_{ij}-p_{ii} \ . 
\label{poten}
\end{equation} 

We will call such an directed graph $V$ {\it an directed graph of (potential) barriers}. It follows from the definition that the directed graph of barriers is not simply strongly connected, but, moreover, if $(i,j)\in{\cal A}V$, then $(j,i)\in{\cal A}V$.

Given a barrier digraph $V$, the relation (\ref{poten}) does not uniquely determine the potential graph {\sf P}. If the same number is added to all edge weights (including loops) of the graph {\sf P}, then the corresponding graph $V$ will be the same. This ambiguity is unimportant for further constructions. 

Let us explain the definitions using the example of a physical potential $P(x)$ of sufficient smoothness (see Fig. \ref{pot}). The minimum points $x_i$ of the function $P(x)$ determine the vertices $i$ of the graph {\sf P}. The potential values at these points correspond to the weights of the loops, and the values at the saddle points of the $x_{ij}$  between the regions of attraction of the dynamical system
 $\dot x=-\nabla P(x)$, having a common boundary (in one dimension --- at points of local maxima), determine the weights of the edges. Exactly
 
\begin{equation*}
p_{ij}=P(x_{ij}) \ , \ \ \ p_{ii}=P(x_i)\ . 
\end{equation*}  

 Note also that in the situation of physical potential, the saddle points surrounding vertex $i$ must have a value $P(x)$ higher than the corresponding minimum point. Then all potential barriers are positive ($v_{ij}>0$, for $(i,j)\in{\cal A}V$). For the weights of the graph {\sf P} this means
 
\begin{equation*}
 p_{ii}< p_{ij} \ , \ \  (i,j)\in{\cal E}{\sf P} \ , i\neq j \ .
\end{equation*} 
If the last property is not satisfied, then, although the graph {\sf P} does not correspond to the function $P(x)$ with the corresponding local minima and saddle points (and the arcs of the barrier graph $V$ can have a negative weight), nevertheless this circumstance is not reflected in the properties under study.

Let us note one more property of the weights of the barrier graph $V$. From (\ref{poten}) we have: $v_{ij}-v_{ji}=p_{jj}-p_{ii}$.  Therefore, if an edge is $(i,j)\in{\cal E}{\sf P}$, then $v_{ij}<v_{ji}$ if and only if $p_{ii}>p_{jj}$.    

If {\sf P} is an arbitrary connected weighted graph, then it can be turned into a reflexive graph by simply adding the missing loops and assigning arbitrary weights to them. The resulting graph is already a potential graph and is uniquely associated with the barrier directed graph $V$. However, with respect to the original graph $P$, such a mapping is obviously no longer unambiguous and there is arbitrariness associated with the choice of loop weights. 

\subsection{Potential shift}

\begin{assertion} Let two undirected graphs {\sf P} and ${\sf P}'$ differ only by a shift of a fixed value $d$ in the weight function: $p_{ij}=p'_{ij}+d$. Let also ${\sf F}$ and ${\sf F}'$ be two $k$-component spanning forests of the graphs {\sc P} and ${\textsc P}'$, respectively. And let ${\cal E}{\sf F}={\cal E}{\sf F}'$. Then the forests {\sf F} and ${\sf F}'$ are simultaneously minimal or not.
\end{assertion}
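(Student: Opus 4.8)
The plan is to reduce the statement to the single observation that any $k$-component spanning forest on the vertex set ${\cal N}$ has exactly $N-k$ edges. First I would recall why this holds: a forest is acyclic, so it carries no loops, and a tree on $m$ vertices has $m-1$ edges; summing over the $k$ trees whose vertex sets partition ${\cal N}$ gives $\sum_{i=1}^{k}(m_i-1)=N-k$. In particular, the number of edges of a $k$-component spanning forest does not depend on which forest one picks --- it is determined solely by $k$.

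Next I would compute the effect of the shift. Since {\sf P} and ${\sf P}'$ share the same underlying graph and differ only through $p_{ij}=p'_{ij}+d$, for the two forests {\sf F} and ${\sf F}'$ with ${\cal E}{\sf F}={\cal E}{\sf F}'$ one gets, using (\ref{vesp}),
\begin{equation*}
\Upsilon^{\sf F}=\sum_{(i,j)\in{\cal E}{\sf F}}p_{ij}=\sum_{(i,j)\in{\cal E}{\sf F}'}(p'_{ij}+d)=\Upsilon^{{\sf F}'}+(N-k)\,d ,
\end{equation*}
because {\sf F} has exactly $N-k$ edges. The crucial point is that this same identity holds for \emph{every} pair of $k$-component spanning forests with equal edge sets: each such forest of {\sf P} is heavier than its twin in ${\sf P}'$ by the constant $(N-k)d$, which is the same for all of them.

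Finally I would conclude by invariance of the minimizer. The identity map on edge sets is a bijection between the $k$-component spanning forests of {\sf P} and those of ${\sf P}'$ (the underlying graphs coincide), and under it every weight is shifted by the fixed constant $(N-k)d$. Adding a constant independent of the forest to every weight does not move the point of minimum, so a $k$-component forest of {\sf P} lies in $\tilde{\textsc F}^{k}$ if and only if the forest of ${\sf P}'$ with the same edge set lies in $\tilde{\textsc F}^{k}$. Applying this to {\sf F} and ${\sf F}'$ gives the claim.

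I do not expect a genuine obstacle here; the only thing that needs care is the edge-count invariance $N-k$, together with the remark that an acyclic graph carries no loops, so that loop weights --- the edges present in the reflexive graph {\sf P} but irrelevant to forests --- never enter $\Upsilon^{\sf F}$. Once the shift is recognized as the uniform constant $(N-k)d$ across all $k$-component forests, the simultaneity of minimality is immediate.
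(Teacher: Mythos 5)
Your proposal is correct and follows essentially the same route as the paper: both rest on the fact that every $k$-component spanning forest has exactly $N-k$ edges, so the shift changes all such forest weights by the common constant $d(N-k)$, leaving the set of minimizers unchanged. The only cosmetic difference is that you phrase the conclusion via a weight-shifting bijection between forests of {\sf P} and ${\sf P}'$, while the paper phrases it via invariance of pairwise weight differences; these are the same argument.
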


\begin{proof}
Indeed, an $N$-vertex undirected forest consisting of $k$ components contains exactly $N-k$ edges. Therefore, for an arbitrary $k$-component spanning forest of ${\sf P}$ and the corresponding forest ${\sf F}'$ (with the same set of edges), which is a subgraph of ${\sf P}'$, we have

\begin{equation*}
\Upsilon^{\sf F}-\Upsilon^{\sf F'}=d(N-k) \ .
\end{equation*}
Thus, when shifting the weight function, $k$-component forests change their weight by the same fixed value. Therefore, for any two forests {\sf F} and {\sf G} of ${\textsf P}$ and the corresponding forests ${\sf F}'$ and ${\sf G}'$ of ${\sf P}'$
\begin{equation*}
\Upsilon^{\sf F}-\Upsilon^{\sf G}=\Upsilon^{\sf F'}-\Upsilon^{\sf G'} \ .
\end{equation*}
\end{proof}
By virtue of this statement, the shift of the potential does not affect the structure of its minimal forests and trees. To find minimal forests of the barrier graph $V$, it is sufficient to use any chosen potential graph {\sf P}, related to $V$ by the relation (\ref{poten}). Therefore, below we fix the potential graph {\sf P} by an arbitrary representative.

\subsection{Removal and giving orientation of trees with replacement of weights}

First of all, let us note that if there is an undirected tree {\sf T} on the set of vertices ${\cal S}\subseteq{\cal N}$ (${\cal V}{\sf T}={\cal S}$), then by choosing a vertex $q\in{\cal S}$, it can be turned into an entering tree $T_q$ in a unique way, simply by appointing this vertex as the root. The directions of the arcs are determined automatically. The opposite action is even simpler. An directed tree $T_q$ is uniquely transformed into an undirected tree {\sf T} by simply replacing all arcs with edges. However, in our situation there are two distinguished weighted graphs - the barrier directed graph $V$ and the unoriented potential graph {\sf P}. Therefore, it is natural to perform the operations of removing and giving orientation with a simultaneous replacement of weights. 

Let $T_q$ be a subgraph of the barrier digraph $V$ that is an entering tree with a root at the vertex 
$q\in{\cal S}\subseteq{\cal N}$, ${\cal V}T_q={\cal S}$  ($T_q\in{\cal T}_{\cal S}^{\bullet q}$). By {\it removing orientation with weight replacement} we will mean associating it with an undirected tree {\sf T}, in which all arcs $(r,t)$ are transformed into edges with a simultaneous replacement of the weight $v_{rt}$ with $p_{rt}$. Obviously, ${\sf T}\in{\textsc T}_{\cal S}$. 

Now let {\sf T} be a tree of the potential graph {\sf P} with vertex set 
${\cal S}\subseteq{\cal N}$: ${\sf T}\in{\textsc T}_{\cal S}$ and $q\in{\cal S}$.{\it Giving orientation with weight replacement} is the mapping of the entering tree $T_q$ with the assigned root $q$ to the tree {\sf T}. In this case, if the arc $(i,j)$ belongs to the obtained tree, its weight is considered to be the number $v_{ij}=p_{ij}-p_{ii}$. Obviously, $T_q\in{\cal T}_{\cal S}^{\bullet q}$. Let us agree to call the trees {\sf T} and $T_q$ corresponding to each other.

Note that the operation of removing orientation is carried over to forests $F$ of the barrier graph $V$ without changes. If there is an undirected forest {\sf F}, then to give it orientation it is necessary to specify roots on each connectivity component.  
 
Let us establish a connection between the weights of entering trees on the set ${\cal S}\subseteq{\cal N}$ with different roots and the weights of the corresponding undirected trees.

\begin{assertion}
Let $q\in{\cal S}\subseteq{\cal N}$, $T\in{\cal T}^{\bullet q}_{\cal S}$, and {\sf T} be the corresponding undirected tree. Then

\begin{equation}
\Upsilon^{T_q}=\Upsilon^{\sf T}-\sum_{t\in{\cal S}}p_{tt}+p_{qq} \ . 
\label{TqT}
\end{equation}

\end{assertion}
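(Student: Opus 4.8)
The plan is to compute $\Upsilon^{T_q}$ directly from the definition (\ref{ves}) together with the weight rule (\ref{poten}), and then to recognize the two resulting sums. First I would invoke the structural fact that $T_q$ is an entering tree with root $q$: by the definition of an entering forest, exactly one arc comes from each vertex of ${\cal S}$ except the root $q$, from which no arc comes. Hence the arcs of $T_q$ are in bijection with the non-root vertices, and I may write ${\cal A}T_q=\{(t,\sigma(t)):t\in{\cal S}\setminus\{q\}\}$, where $\sigma(t)$ denotes the entry of the unique arc leaving $t$.

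Next I would substitute (\ref{poten}) into the weight and split the sum:
\begin{equation*}
\Upsilon^{T_q}=\sum_{t\in{\cal S}\setminus\{q\}}v_{t\sigma(t)}=\sum_{t\in{\cal S}\setminus\{q\}}p_{t\sigma(t)}-\sum_{t\in{\cal S}\setminus\{q\}}p_{tt}\ .
\end{equation*}
For the first sum I would use that removing orientation with weight replacement carries each arc $(t,\sigma(t))$ to an edge of ${\sf T}$ of weight $p_{t\sigma(t)}=p_{\sigma(t)t}$ (the edge weight depending only on the unordered pair), and that this correspondence is a bijection between the $|{\cal S}|-1$ arcs of $T_q$ and the $|{\cal S}|-1$ edges of ${\sf T}$; hence the first sum equals $\Upsilon^{\sf T}$. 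For the second sum I would note that $t$ ranges over all of ${\cal S}$ except $q$, so $\sum_{t\in{\cal S}\setminus\{q\}}p_{tt}=\sum_{t\in{\cal S}}p_{tt}-p_{qq}$. Combining the two parts yields $\Upsilon^{T_q}=\Upsilon^{\sf T}-\sum_{t\in{\cal S}}p_{tt}+p_{qq}$, which is (\ref{TqT}).

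I do not expect a genuine obstacle: the identity follows directly from the definitions. The only point requiring care is the bookkeeping of the loop terms $p_{tt}$, namely observing that a $-p_{tt}$ summand is produced by each \emph{non-root} vertex, so precisely the loop of the root, $p_{qq}$, is absent from the full sum $\sum_{t\in{\cal S}}p_{tt}$ — which is exactly what produces the correction term $+p_{qq}$. The two facts that make the accounting work are the identification of the arc set of an entering tree with its set of non-root vertices, and the symmetry $p_{ij}=p_{ji}$ of the edge weights of ${\sf P}$.
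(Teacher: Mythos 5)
Your proposal is correct and follows essentially the same route as the paper's proof: both arguments identify the arcs of the entering tree $T_q$ with its non-root vertices, substitute $v_{ij}=p_{ij}-p_{ii}$ to split the weight into an edge sum (recognized as $\Upsilon^{\sf T}$) minus the sum of loops over ${\cal S}\setminus\{q\}$, and then rewrite the latter as $\sum_{t\in{\cal S}}p_{tt}-p_{qq}$. If anything, your explicit indexing of each arc by its tail vertex $t\mapsto(t,\sigma(t))$ makes the loop bookkeeping slightly more transparent than the paper's notation, but the substance is identical.
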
  

\begin{proof}
In the entering tree $T_q$, arcs from the set ${\cal S}$ do not outgo and there are no arcs entering to ${\cal S}$. Therefore, $\Upsilon^{T_q}=\Upsilon^{T_q}_{\cal S}=\Upsilon^{T_q|_{\cal S}}$. We have
\begin{equation}
\Upsilon^{T_q}=\sum_{(r,t)\in{\cal A}T_q}v_{rt}=\sum_{(r,t)\in{\cal E}{\sf T}}(p_{rt}-p_{tt}) \ . 
\end{equation}
In the entering tree $T_q$, a single arc originates from each vertex of the set ${\cal S}$ except the root $q$. Therefore, in the last expression, each value of $p_{tt}$ is present exactly once, with the exception of the missing value of $p_{qq}$. Thus

\begin{equation*}
\Upsilon^{T_q}=\sum_{(r,t)\in{\cal E}{\sf T}}(p_{rt}-p_{tt})=\sum_{(r,t)\in{\cal E}{\sf T}}p_{rt}-\sum_{t\in{\cal S}\setminus \{ q\}}p_{tt}  \ . 
\end{equation*}
In the last expression, the first sum corresponds to the weight of the undirected tree {\sf T} and is equal to $\Upsilon^{\sf T}$. Therefore
\begin{equation}
\Upsilon^{T_q}=\Upsilon^{\sf T}-\sum_{t\in{\cal S}\setminus \{ q\}}p_{tt}  \ . 
\label{TqT'} 
\end{equation}
Considering that
\begin{equation*}
\sum_{t\in{\cal S}\setminus \{ q\}}p_{tt}=\sum_{t\in{\cal S}}p_{tt}-p_{qq} \ , 
\end{equation*}
from (\ref{TqT'}) we obtain (\ref{TqT}).
 
\end{proof}

From the proven statement it immediately follows
\begin{theorem}
Let $q\in{\cal S}\subseteq{\cal N}$, $T_q\in{\cal T}^{\bullet q}_{\cal S}$ and {\sf T} be the corresponding undirected tree. Then $T_q\in\tilde{\cal T}^{\bullet q}_{\cal S}$ if and only if ${\sf T}\in\tilde{\textsc T}_{\cal S}$.
\end{theorem}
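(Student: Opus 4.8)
The plan is to leverage the weight relation (\ref{TqT}) just established, together with the bijectivity of the orientation correspondence. First I would observe that the operations of removing and giving orientation with weight replacement, as described above, set up a bijection between the set ${\cal T}^{\bullet q}_{\cal S}$ of entering trees on ${\cal S}$ rooted at $q$ and the set ${\textsc T}_{\cal S}$ of undirected trees on ${\cal S}$: each undirected tree ${\sf T}\in{\textsc T}_{\cal S}$ yields, upon assigning $q$ as its root, a unique $T_q\in{\cal T}^{\bullet q}_{\cal S}$, and conversely, so that the two minimization problems defining $\lambda_{\cal S}^{\bullet q}$ and $\nu_{\cal S}$ range over index sets that are in one-to-one correspondence.

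Second, the key point is that (\ref{TqT}) can be read as $\Upsilon^{T_q}=\Upsilon^{\sf T}-C$, where $C=\sum_{t\in{\cal S}}p_{tt}-p_{qq}$ depends only on the set ${\cal S}$ and the fixed root $q$, and not on the particular tree. In other words, passing from an undirected tree to its oriented counterpart shifts the weight by one and the same constant for every member of the family. Adding or subtracting a fixed constant to every element of a family does not change which elements attain the minimum; it merely shifts the value of that minimum.

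Therefore a tree $T_q$ realizes $\lambda_{\cal S}^{\bullet q}=\min_{T\in{\cal T}^{\bullet q}_{\cal S}}\Upsilon^T$ precisely when its undirected counterpart ${\sf T}$ realizes $\nu_{\cal S}=\min_{{\sf T}\in{\textsc T}_{\cal S}}\Upsilon^{\sf T}$, which is exactly the claimed equivalence $T_q\in\tilde{\cal T}^{\bullet q}_{\cal S}\Leftrightarrow{\sf T}\in\tilde{\textsc T}_{\cal S}$. As a by-product one also reads off the identity $\lambda_{\cal S}^{\bullet q}=\nu_{\cal S}-\sum_{t\in{\cal S}}p_{tt}+p_{qq}$ relating the two minima.

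I expect no genuine obstacle here: the entire content is the tree-independence of the additive constant $C$ in (\ref{TqT}), which is precisely what makes the directed and undirected optimization problems equivalent. The only point requiring a moment's care is confirming that the orientation correspondence is a genuine bijection, so that no minimizer is lost or created when passing between the two pictures; but this is already guaranteed by the uniqueness statements accompanying the definition of those operations.
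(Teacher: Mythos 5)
Your proposal is correct and follows essentially the same route as the paper: both rest on the observation that, by the relation (\ref{TqT}), the weights of $T_q$ and {\sf T} differ by a constant depending only on ${\cal S}$ and $q$ (not on the tree), so the minima over the two families are attained simultaneously. Your explicit remark on the bijectivity of the orientation correspondence is a point the paper leaves implicit, but it is the same argument.
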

\begin{proof}

In the expression (\ref{TqT'}), for a fixed $q$, the value of $\sum\limits_{t\in{\cal S}\setminus \{ q\}}p_{tt}$ is the same for all trees in ${\cal T}^{\bullet q}_{\cal S}$. Therefore, the minimum weights of the trees $T_q$ and {\sf T} are reached simultaneously.
\end{proof}
A special case of this theorem is considered in \cite{Bu-Kh}. 

\begin{assertion}
Let ${\sf T}\in{\textsc T}_{\cal S}$, and let $T_q$ and $T_{q'}$ be the corresponding entering trees with roots at $q$ and $q'$, respectively. Then
\begin{equation}
\Upsilon^{T_q}-p_{qq}=\Upsilon^{T_{q'}}-p_{q'q'} \ . 
\label{qq'}
\end{equation}
\end{assertion}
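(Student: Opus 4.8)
The plan is to derive this identity as an immediate consequence of the preceding Claim, equation~(\ref{TqT}). The key observation is that $T_q$ and $T_{q'}$ are the two orientations of one and the same undirected tree ${\sf T}\in{\textsc T}_{\cal S}$, obtained by appointing the roots $q$ and $q'$ respectively; thus both entering trees share the same underlying ${\sf T}$ and the same vertex set ${\cal S}$. I would therefore apply the formula relating the weight of an entering tree to the weight of its corresponding undirected tree separately for each of the two roots, and then isolate the root-dependent term.

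Concretely, equation~(\ref{TqT}) gives, for the root $q$,
\[
\Upsilon^{T_q}=\Upsilon^{\sf T}-\sum_{t\in{\cal S}}p_{tt}+p_{qq},
\]
and, applied verbatim to the root $q'$ (with the same ${\sf T}$ and the same ${\cal S}$),
\[
\Upsilon^{T_{q'}}=\Upsilon^{\sf T}-\sum_{t\in{\cal S}}p_{tt}+p_{q'q'}.
\]
Subtracting the loop weight at the chosen root from each equality,
\[
\Upsilon^{T_q}-p_{qq}=\Upsilon^{\sf T}-\sum_{t\in{\cal S}}p_{tt}
=\Upsilon^{T_{q'}}-p_{q'q'},
\]
since the right-hand expression $\Upsilon^{\sf T}-\sum_{t\in{\cal S}}p_{tt}$ depends only on the common undirected tree ${\sf T}$ and on the common vertex set ${\cal S}$, and in particular is independent of which vertex was designated as the root. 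This yields~(\ref{qq'}) directly.

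There is essentially no genuine obstacle here: the statement is a corollary of~(\ref{TqT}), and the only point requiring a moment's care is the justification that the subtracted quantity is root-invariant. That invariance is exactly what~(\ref{TqT'}) already records, namely that $\Upsilon^{T_q}=\Upsilon^{\sf T}-\sum_{t\in{\cal S}\setminus\{q\}}p_{tt}$, so that moving $p_{qq}$ to the left produces a symmetric sum $\sum_{t\in{\cal S}}p_{tt}$ in which the root no longer plays a distinguished role. Conceptually, this identity says that the difference between an entering tree's weight and the loop weight at its root is a function of the undirected tree alone, which is precisely the invariant that makes the removal-of-orientation correspondence well behaved.
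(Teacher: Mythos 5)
Your proof is correct and follows exactly the paper's own argument: apply equation~(\ref{TqT}) to both roots $q$ and $q'$ of the same underlying undirected tree ${\sf T}$ and subtract, observing that the root-independent quantity $\Upsilon^{\sf T}-\sum_{t\in{\cal S}}p_{tt}$ is common to both expressions. No gaps; this matches the paper's proof of the claim.
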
  

\begin{proof}
Writing the expression (\ref{TqT}) for the vertices $q$ and $q'$ and subtracting one from the other, we obtain
\begin{equation*}
\Upsilon^{T_q}-\Upsilon^{T_{q'}}=p_{qq}-p_{q'q'} \ . 
\end{equation*}
which coincides with (\ref{qq'}). 
\end{proof}

\subsection{Weights of type $\lambda$ and type $\nu$  }

\begin{assertion}
Let $\{q,q'\}\subseteq{\cal S}\subseteq{\cal N}$, ${\sf T}\in\tilde{\textsc T}_{\cal S}$,  then 

\begin{equation}
\lambda_{\cal S}^{\bullet q}-p_{qq}=\lambda_{\cal S}^{\bullet q'}-p_{q'q'} \ . 
\label{lqq'}
\end{equation}
\end{assertion}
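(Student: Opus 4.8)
The plan is to reduce the statement to a single minimal undirected tree and then invoke the already-proved relation (\ref{qq'}). First I would recall that the minimum defining $\lambda_{\cal S}^{\bullet q}$ is attained, i.e. $\lambda_{\cal S}^{\bullet q}=\Upsilon^{T_q}$ for some $T_q\in\tilde{\cal T}^{\bullet q}_{\cal S}$, and likewise for $q'$. So it suffices to produce, for both roots simultaneously, \emph{one} minimal entering tree whose weight realizes the corresponding $\lambda$. To this end I would take the given minimal undirected tree ${\sf T}\in\tilde{\textsc T}_{\cal S}$ and apply \emph{giving orientation with weight replacement} twice, rooting the same skeleton ${\sf T}$ first at $q$ and then at $q'$. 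This yields entering trees $T_q\in{\cal T}^{\bullet q}_{\cal S}$ and $T_{q'}\in{\cal T}^{\bullet q'}_{\cal S}$ built from one and the same undirected tree.

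Next I would use Theorem 3: since ${\sf T}$ is minimal on ${\cal S}$, each of its rootings is minimal in its own class, so $T_q\in\tilde{\cal T}^{\bullet q}_{\cal S}$ and $T_{q'}\in\tilde{\cal T}^{\bullet q'}_{\cal S}$. Consequently $\Upsilon^{T_q}=\lambda_{\cal S}^{\bullet q}$ and $\Upsilon^{T_{q'}}=\lambda_{\cal S}^{\bullet q'}$. This is exactly the point where the hypothesis ${\sf T}\in\tilde{\textsc T}_{\cal S}$ is needed: minimality of the undirected tree transfers, via Theorem 3, to minimality of both rootings at once, so a single ${\sf T}$ serves every root.

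Finally, because $T_q$ and $T_{q'}$ are the two rootings of the \emph{same} undirected tree ${\sf T}$, the preceding Claim (equation (\ref{qq'})) applies verbatim and gives $\Upsilon^{T_q}-p_{qq}=\Upsilon^{T_{q'}}-p_{q'q'}$. Substituting the two identities $\Upsilon^{T_q}=\lambda_{\cal S}^{\bullet q}$ and $\Upsilon^{T_{q'}}=\lambda_{\cal S}^{\bullet q'}$ from the previous step then yields $\lambda_{\cal S}^{\bullet q}-p_{qq}=\lambda_{\cal S}^{\bullet q'}-p_{q'q'}$, which is (\ref{lqq'}).

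I expect no genuine obstacle beyond one bookkeeping point that must not be glossed over: the two directed trees must arise from the \emph{same} undirected ${\sf T}$. If one instead chose an arbitrary minimal entering tree for $q$ and an independently chosen minimal entering tree for $q'$, relation (\ref{qq'}) would not apply, since it governs the two rootings of a fixed undirected tree rather than two unrelated trees. The substance of the statement is precisely that the root-dependent correction $p_{qq}$ cancels the root-dependence of $\lambda_{\cal S}^{\bullet q}$, and this cancellation works because Theorem 3 lets one minimal undirected tree realize every $\lambda_{\cal S}^{\bullet q}$ at once.
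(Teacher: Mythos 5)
Your proof is correct and follows essentially the same route as the paper: orient the single minimal undirected tree ${\sf T}$ at both roots, invoke Theorem 3 to conclude that each rooting $T_q$, $T_{q'}$ is minimal in its class (so $\Upsilon^{T_q}=\lambda_{\cal S}^{\bullet q}$ and $\Upsilon^{T_{q'}}=\lambda_{\cal S}^{\bullet q'}$), and then apply (\ref{qq'}) to the two rootings of the same tree. The bookkeeping point you flag --- that both directed trees must come from one and the same undirected ${\sf T}$ --- is indeed the crux, and the paper's argument relies on it in exactly the same way.
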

\begin{proof}

We introduce $T_q$ and $T_{q'}$ --- the corresponding trees to the tree {\sf T} with roots at $q$ and $q'$, respectively.
Note that according to Theorem 3  $T_q\in\tilde{\cal T}^{\bullet q}_{\cal S}$  and  $T_{q'}\in\tilde{\cal T}^{\bullet q'}_{\cal S}$. Thus, $\lambda_{\cal S}^{\bullet q}=\Upsilon^{T_q}$ and $\lambda_{\cal S}^{\bullet q'}=\Upsilon^{T_{q'}}$. Now the statement follows from (\ref{qq'}). 
\end{proof}

\begin{assertion}
Let $q\in{\cal S}\subseteq{\cal N}$,  ${\sf T}\in\tilde{\textsc T}_{\cal S}$. Then  

\begin{equation}
\lambda_{\cal S}^{\bullet q}=\nu_{\cal S}-\sum_{t\in{\cal S}}p_{tt} +p_{qq}\ . 
\label{ln}
\end{equation}
Moreover, if $p_{yy}=\min\limits_{t\in{\cal S}}p_{tt}$, then

\begin{equation}
\lambda_{\cal S}^{\bullet}=\lambda_{\cal S}^{\bullet y}=\Upsilon^{T_y} \ , 
\label{lu}
\end{equation}
where $T_y$ is rooted at $y$ the entering tree corresponding to the tree {\sf T}. 
\end{assertion}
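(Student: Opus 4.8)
The plan is to prove the two displayed equalities (\ref{ln}) and (\ref{lu}) in sequence, leaning almost entirely on the relation (\ref{TqT}) from Claim~3 and on Theorem~3, which guarantees that the entering tree $T_q$ corresponding to a minimal undirected tree {\sf T} is itself minimal among rooted trees in ${\cal T}^{\bullet q}_{\cal S}$.

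First I would establish (\ref{ln}). Take any ${\sf T}\in\tilde{\textsc T}_{\cal S}$, so that $\Upsilon^{\sf T}=\nu_{\cal S}$ by the definition (\ref{nu}), and let $T_q$ be the corresponding entering tree rooted at $q$. By Theorem~3, minimality of {\sf T} forces $T_q\in\tilde{\cal T}^{\bullet q}_{\cal S}$, hence $\lambda_{\cal S}^{\bullet q}=\Upsilon^{T_q}$. Substituting $\Upsilon^{\sf T}=\nu_{\cal S}$ directly into (\ref{TqT}) then yields
\[
\lambda_{\cal S}^{\bullet q}=\Upsilon^{T_q}=\nu_{\cal S}-\sum_{t\in{\cal S}}p_{tt}+p_{qq},
\]
which is exactly (\ref{ln}). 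The only subtlety worth flagging is that (\ref{TqT}) is stated for an arbitrary $T\in{\cal T}^{\bullet q}_{\cal S}$ and its corresponding undirected tree; here I am applying it with the particular minimal {\sf T}, and Theorem~3 is precisely what licenses replacing $\Upsilon^{T_q}$ by $\lambda_{\cal S}^{\bullet q}$ on the left.

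Next I would derive (\ref{lu}) from (\ref{ln}). Since the term $\nu_{\cal S}-\sum_{t\in{\cal S}}p_{tt}$ in (\ref{ln}) does not depend on $q$, the quantity $\lambda_{\cal S}^{\bullet q}$ is minimized over $q\in{\cal S}$ exactly when $p_{qq}$ is minimized. Taking $y$ with $p_{yy}=\min_{t\in{\cal S}}p_{tt}$ therefore gives $\lambda_{\cal S}^{\bullet}=\min_{q\in{\cal S}}\lambda_{\cal S}^{\bullet q}=\lambda_{\cal S}^{\bullet y}$ by the definition (\ref{bt}), and applying (\ref{ln}) at $q=y$ together with (\ref{TqT}) shows this common value equals $\Upsilon^{T_y}$, establishing (\ref{lu}). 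One could even bypass (\ref{ln}) and argue directly from Claim~5: equation (\ref{qq'}) (rewritten as $\Upsilon^{T_q}-p_{qq}=\Upsilon^{T_{q'}}-p_{q'q'}$) shows that minimizing $\Upsilon^{T_q}$ over roots is the same as minimizing $p_{qq}$, which is arguably the cleaner route.

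I do not expect a genuine obstacle here: both parts are short substitutions into already-proved identities. The one place demanding care is bookkeeping of which trees are minimal in which sense --- undirected minimality (membership in $\tilde{\textsc T}_{\cal S}$) versus rooted directed minimality (membership in $\tilde{\cal T}^{\bullet q}_{\cal S}$) --- and invoking Theorem~3 at the right moment to pass between them; a sloppy statement would leave a gap precisely at the step $\Upsilon^{T_q}=\lambda_{\cal S}^{\bullet q}$. Beyond that, the argument is purely a matter of transcribing (\ref{TqT}) and reading off the $q$-dependence.
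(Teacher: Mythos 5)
Your proof is correct and takes essentially the same route as the paper's: Theorem 3 licenses $\lambda_{\cal S}^{\bullet q}=\Upsilon^{T_q}$, substitution into (\ref{TqT}) gives (\ref{ln}), and observing that only the term $p_{qq}$ depends on $q$ gives (\ref{lu}); your alternative remark via (\ref{qq'}) is also sound. The only slips are in attribution, not mathematics: relation (\ref{TqT}) is Claim 2 (not Claim 3) and relation (\ref{qq'}) is Claim 3 (not Claim 5), though the equation labels you cite are the correct ones.
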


\begin{proof}
By condition $\Upsilon^{\sf T}=\nu_{\cal S}$. According to Theorem 3, for each of the entering trees $T_q$ corresponding to the tree {\sf T}, $\lambda_{\cal S}^{\bullet q}=\Upsilon^{T_q}$ holds. Substituting this value into (\ref{TqT}) from Claim 2, we obtain(\ref{ln}). 

In (\ref{ln}) the only term that depends on $q$ is the last term $p_{qq}$. By assumption, it is minimal when $q=y$. Thus 
\begin{equation*}
\lambda_{\cal S}^{\bullet}=
\min_{q\in{\cal S}}\lambda_{\cal S}^{\bullet q}= 
\min_{q\in{\cal S}}( \nu_{\cal S}-\sum_{t\in{\cal S}}p_{tt}+p_{qq})=
\end{equation*}
\begin{equation*}
=\nu_{\cal S}-\sum_{t\in{\cal S}}p_{tt}+\min_{q\in{\cal S}}p_{qq}=\Upsilon^{\sf T}-\sum_{t\in{\cal S}}p_{tt}+p_{yy}=\Upsilon^{T_y} \ .  
\end{equation*}
The last equality takes into account (\ref{TqT}).
\end{proof}

\begin{assertion}
Let ${\cal S}\subsetneq{\cal N}$,  then 

\begin{equation}
\lambda^\circ_{\cal S}=\nu_{\cal S}-\sum_{t\in{\cal S}}p_{tt}+ 
\min_{\begin{smallmatrix} q\in{\cal S}, \\ r\notin{\cal S} \end{smallmatrix}}p_{qr} \ . 
\label{lnn}
\end{equation}

\end{assertion}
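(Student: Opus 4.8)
The plan is to combine the formula for $\lambda_{\cal S}^{\circ q}$ established in Proposition~1 with the relation between directed tree-like weights and undirected weights from Claim~5. Recall from Proposition~1 (equation (\ref{loq})) that for each $q\in{\cal S}$ we have $\lambda_{\cal S}^{\circ q}=\lambda_{\cal S}^{\bullet q}+\min_{r\notin{\cal S}}v_{qr}$, and that $\lambda_{\cal S}^\circ=\min_{q\in{\cal S}}\lambda_{\cal S}^{\circ q}$ by (\ref{cm}). My strategy is to substitute the $\nu$-expression for $\lambda_{\cal S}^{\bullet q}$ and rewrite the $v_{qr}$ term in terms of the potential weights $p$, then perform the double minimization over $q$ and $r$.

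First I would take equation (\ref{loq}) and substitute the value of $\lambda_{\cal S}^{\bullet q}$ from Claim~5, equation (\ref{ln}), namely $\lambda_{\cal S}^{\bullet q}=\nu_{\cal S}-\sum_{t\in{\cal S}}p_{tt}+p_{qq}$; this uses that ${\sf T}\in\tilde{\textsc T}_{\cal S}$ exists, which follows from ${\cal T}^\circ_{\cal S}\neq\emptyset$ (the underlying graph restricted to ${\cal S}$ must be connected). At the same time I would expand the barrier weight via the defining relation (\ref{poten}): $v_{qr}=p_{qr}-p_{qq}$. The key cancellation then happens: combining $p_{qq}$ (coming from $\lambda_{\cal S}^{\bullet q}$) with the $-p_{qq}$ hidden inside $v_{qr}=p_{qr}-p_{qq}$ eliminates the loop weight entirely, leaving
\begin{equation*}
\lambda_{\cal S}^{\circ q}=\nu_{\cal S}-\sum_{t\in{\cal S}}p_{tt}+\min_{r\notin{\cal S}}p_{qr}\ .
\end{equation*}

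Finally I would minimize over $q\in{\cal S}$. Since $\nu_{\cal S}-\sum_{t\in{\cal S}}p_{tt}$ is a constant independent of $q$, taking $\min_{q\in{\cal S}}$ commutes with it and merges the inner $\min_{r\notin{\cal S}}$ into the joint minimization $\min_{q\in{\cal S},\,r\notin{\cal S}}p_{qr}$, yielding exactly (\ref{lnn}). The main subtlety — though not really an obstacle — is ensuring the hypotheses align: I must check that ${\cal T}^\circ_{\cal S}\neq\emptyset$ guarantees both that a minimal undirected tree ${\sf T}\in\tilde{\textsc T}_{\cal S}$ exists (so Claim~5 applies) and that at least one edge leaves ${\cal S}$ to some $r\notin{\cal S}$ (so the inner minimum is finite), both of which are immediate from the definition of ${\cal T}^{\circ q}_{\cal S}$ given in Section~3. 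The rest is the routine algebraic substitution and cancellation sketched above.
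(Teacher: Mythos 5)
Your proposal is correct and takes essentially the same route as the paper's proof: substitute Claim 5's formula (\ref{ln}) into Proposition 1, expand $v_{qr}=p_{qr}-p_{qq}$ via (\ref{poten}) so that the loop weight $p_{qq}$ cancels, and merge the two minimizations into the joint minimum over $q\in{\cal S}$, $r\notin{\cal S}$. The only difference is a trivial reordering — you work per-root with (\ref{loq}) and (\ref{cm}) and minimize over $q$ at the end, whereas the paper carries $\min_{q\in{\cal S}}$ through the whole computation starting from (\ref{lo}).
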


\begin{proof} 

Let's write (\ref{lo}), taking into account (\ref{ln})

\begin{equation*}
\lambda_{\cal S}^\circ =\min_{q\in {\cal S} }\left(\lambda_{\cal S}^{\bullet q} + \min_{r\notin{\cal S}}v_{qr}\right)= \min_{q\in{\cal S}}
\left(\nu_{\cal S}+p_{qq}-\sum_{t\in{\cal S}}p_{tt} +\min_{r\notin{\cal S}}v_{qr}\right) = 
\end{equation*}
\begin{equation*}
=\nu_{\cal S}-\sum_{t\in{\cal S}}p_{tt}+\min_{q\in{\cal S}}\left(p_{qq}+\min_{r\notin{\cal S}}v_{qr}\right)= 
\end{equation*}
\begin{equation*}
=\nu_{\cal S}-\sum_{t\in{\cal S}}p_{tt}+\min_{q\in{\cal S}}\left(p_{qq}+\min_{r\notin{\cal S}}(p_{qr}-p_{qq})\right)=\nu_{\cal S}-\sum_{t\in{\cal S}}p_{tt}+\min_{\begin{smallmatrix} q\in{\cal S}, \\ r\notin{\cal S} \end{smallmatrix}}p_{qr}. 
\end{equation*}

\end{proof}

\begin{assertion}
Let ${\cal S}\subsetneq{\cal N}$, and $p_{ab}=\min\limits_{\begin{smallmatrix} q\in{\cal S}, \\ r\notin{\cal S} \end{smallmatrix}}p_{qr}$, then 

\begin{equation}
\lambda_{\cal S}^\circ=\lambda_{\cal S}^{\bullet a}+v_{ab} \ . \label{lv}
\end{equation}
\end{assertion}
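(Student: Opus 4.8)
The plan is to read off (\ref{lv}) directly from the two closed-form expressions already obtained in Claims 5 and 6, together with the defining relation $v_{ab}=p_{ab}-p_{aa}$ of (\ref{poten}). First I would record that the minimizing pair in the hypothesis satisfies $a\in{\cal S}$ and $b\notin{\cal S}$, so that $\lambda^{\bullet a}_{\cal S}$ is a legitimate object (its root $a$ lies in ${\cal S}$) and $(a,b)$ is a genuine barrier arc leaving ${\cal S}$. Applying Claim 5 with $q=a$ then gives $\lambda^{\bullet a}_{\cal S}=\nu_{\cal S}-\sum_{t\in{\cal S}}p_{tt}+p_{aa}$.

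Next I would add $v_{ab}$ and substitute $v_{ab}=p_{ab}-p_{aa}$, so that the two occurrences of the loop weight $p_{aa}$ cancel, leaving $\lambda^{\bullet a}_{\cal S}+v_{ab}=\nu_{\cal S}-\sum_{t\in{\cal S}}p_{tt}+p_{ab}$. Finally, invoking the hypothesis $p_{ab}=\min_{q\in{\cal S},\,r\notin{\cal S}}p_{qr}$ identifies the right-hand side, via formula (\ref{lnn}) of Claim 6, as exactly $\lambda^\circ_{\cal S}$, which is (\ref{lv}). The entire argument is thus a single cancellation once Claims 5 and 6 are in hand; the only bookkeeping worth attention is that $p_{aa}$ enters $\lambda^{\bullet a}_{\cal S}$ with a plus sign (as the root contribution in Claim 5) and is removed again by the barrier relation, which is precisely why the root $a$ of the cheapest boundary edge, rather than the minimizer $y$ of $p_{qq}$ appearing in Claim 5's formula (\ref{lu}), is the relevant one here.

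If one preferred to avoid quoting both closed forms, an equivalent route would begin from the per-root identity (\ref{loq}), use Claim 4 to write $\lambda^{\bullet q}_{\cal S}=\lambda^{\bullet a}_{\cal S}-p_{aa}+p_{qq}$, and then observe that $\lambda^{\circ q}_{\cal S}=\lambda^{\bullet q}_{\cal S}+\min_{r\notin{\cal S}}v_{qr}=\lambda^{\bullet a}_{\cal S}-p_{aa}+\min_{r\notin{\cal S}}p_{qr}$, whose minimum over $q\in{\cal S}$ is attained at the pair $(a,b)$; this makes transparent that the optimal root of the induced $\bullet$-tree is the endpoint $a$ of the cheapest boundary edge. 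I do not expect any genuine obstacle: the degenerate case ${\textsc T}_{\cal S}=\emptyset$ is covered by the convention that all these weights equal $\infty$, so (\ref{lv}) holds there as well, and in the finite case the statement is immediate from the cited identities.
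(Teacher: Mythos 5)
Your proposal is correct and is essentially the paper's own proof: the paper likewise combines formula (\ref{lnn}) of Claim 6 with formula (\ref{ln}) of Claim 5 by adding and subtracting $p_{aa}$, so that $\lambda_{\cal S}^\circ=\nu_{\cal S}-\sum_{t\in{\cal S}}p_{tt}+p_{ab}+p_{aa}-p_{aa}=\lambda_{\cal S}^{\bullet a}+p_{ab}-p_{aa}=\lambda_{\cal S}^{\bullet a}+v_{ab}$, which is your computation read in the opposite direction. Your remark about the cancellation of the root loop weight $p_{aa}$, and your alternative route via (\ref{loq}) and Claim 4, add nothing contradictory; the main argument matches the paper's.
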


\begin{proof}
Let's add and subtract the value $p_{aa}$ into (\ref{lnn}) and take into account (\ref{ln}): 
\begin{equation*}
\lambda_{\cal S}^\circ=\nu_{\cal S}-\sum_{t\in{\cal S}}p_{tt}+ p_{ab} +p_{aa}-p_{aa}=\lambda_{\cal S}^{\bullet a}+p_{ab}-p_{aa}=\lambda_{\cal S}^{\bullet a}+v_{ab} \ . 
\end{equation*}
\end{proof}

\begin{assertion}
Let $t\in {\cal S}\subsetneq{\cal N}$, $p_{ab}=\min\limits_{\begin{smallmatrix} q\in{\cal S}, \\ r\notin{\cal S} \end{smallmatrix}}p_{qr}$, then 
\begin{equation}
\lambda_{\cal S}^{\circ } =\lambda_{\cal S}^{\bullet t}-p_{tt}+p_{ab} \ . 
\label{lt}
\end{equation}
\end{assertion}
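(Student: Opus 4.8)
The plan is to obtain (\ref{lt}) by chaining together the two formulas already proven that express $\lambda^{\bullet t}_{\cal S}$ and $\lambda^\circ_{\cal S}$ through the undirected tree-minimum $\nu_{\cal S}$, and then eliminating $\nu_{\cal S}$. The statement is really a generalization of the already-established special case (\ref{lv}) from the distinguished vertex $a$ to an arbitrary root $t\in{\cal S}$, so I expect the work to be a short algebraic manipulation rather than anything structural.

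First I would record, from Claim 5 (equation (\ref{ln})) applied with $q=t$, that
\[
\lambda_{\cal S}^{\bullet t}-p_{tt}=\nu_{\cal S}-\sum_{s\in{\cal S}}p_{ss}.
\]
The point I would emphasize is that the right-hand side carries no dependence on $t$: this is exactly the root-shift invariance recorded in Claim 4 (equation (\ref{lqq'})), which says $\lambda^{\bullet q}_{\cal S}-p_{qq}$ is the same for every $q\in{\cal S}$. Next I would invoke Claim 6 (equation (\ref{lnn})): since $p_{ab}=\min_{q\in{\cal S},\,r\notin{\cal S}}p_{qr}$,
\[
\lambda_{\cal S}^\circ=\nu_{\cal S}-\sum_{s\in{\cal S}}p_{ss}+p_{ab}.
\]
Substituting the first display into the second immediately yields $\lambda_{\cal S}^\circ=\lambda_{\cal S}^{\bullet t}-p_{tt}+p_{ab}$, which is (\ref{lt}).

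An alternative route, avoiding any reappeal to $\nu_{\cal S}$, starts directly from the proven special case (\ref{lv}), $\lambda_{\cal S}^\circ=\lambda_{\cal S}^{\bullet a}+v_{ab}$: one uses Claim 4 with $q=t$, $q'=a$ to write $\lambda_{\cal S}^{\bullet a}=\lambda_{\cal S}^{\bullet t}-p_{tt}+p_{aa}$ and then the identity $v_{ab}=p_{ab}-p_{aa}$, which again gives (\ref{lt}). There is no real obstacle here: the only thing one must \emph{see} is that $\lambda_{\cal S}^{\bullet q}-p_{qq}$ does not depend on the root $q$, which forces the correction term to be $-p_{tt}+p_{ab}$. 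The sole caveat to check is that the cited identities are applicable, i.e. that $\tilde{\textsc T}_{\cal S}\neq\emptyset$ (equivalently $\nu_{\cal S}<\infty$); this is guaranteed under the hypotheses, and in the degenerate case where no tree on ${\cal S}$ exists both sides of (\ref{lt}) equal $+\infty$ by the standing convention, so the identity holds trivially.
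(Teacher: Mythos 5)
Your proof is correct, and your ``alternative route'' is in fact exactly the paper's own proof: the paper writes down (\ref{lv}), expands $v_{ab}=p_{ab}-p_{aa}$ via (\ref{poten}), and then applies the root-shift invariance (\ref{lqq'}) to replace $\lambda^{\bullet a}_{\cal S}-p_{aa}$ by $\lambda^{\bullet t}_{\cal S}-p_{tt}$. Your primary route --- eliminating $\nu_{\cal S}$ between (\ref{ln}) and (\ref{lnn}) --- is a trivially equivalent rearrangement of the same facts that bypasses Claim 7 altogether; it buys a slightly more transparent view of \emph{why} the formula holds (the quantity $\lambda^{\bullet q}_{\cal S}-p_{qq}=\nu_{\cal S}-\sum_{s\in{\cal S}}p_{ss}$ is root-independent, so the correction term can only be $-p_{tt}+p_{ab}$), whereas the paper's version is the more economical continuation of the chain of claims already established. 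Your caveat about the degenerate case $\tilde{\textsc T}_{\cal S}=\emptyset$, where both sides are $+\infty$ by the paper's convention, is handled at least as carefully as in the paper, which does not discuss it; note only that nonemptiness is not literally ``guaranteed under the hypotheses'' of the claim (connectivity of ${\cal S}$ in {\sf P} is nowhere assumed), so the fallback to the $\infty=\infty$ reading is the part of your remark that actually does the work.
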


\begin{proof}
Let's write it down (\ref{lv}) and take it into account (\ref{lqq'}): 
\begin{equation*}
\lambda_{\cal S}^\circ=\lambda_{\cal S}^{\bullet a}+v_{ab}=  
\lambda_{\cal S}^{\bullet a}+p_{ab}-p_{aa}= \lambda_{\cal S}^{\bullet a}-p_{aa}+p_{ab}= 
\lambda_{\cal S}^{\bullet t}-p_{tt}+p_{ab}
\ ,  
\end{equation*}
which coincides with (\ref{lt}).

\end{proof}

\begin{assertion} 
Equality is fair

\begin{equation}
\lambda_{\cal S}^\circ -\lambda_{\cal S}^{\bullet}= \min\limits_{\begin{smallmatrix} q\in{\cal S}, \\ r\notin{\cal S} \end{smallmatrix}}p_{qr} -\min\limits_{t\in{\cal S}}p_{tt} \ . 
\label{ll}
\end{equation}
\end{assertion}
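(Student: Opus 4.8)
The plan is to read off the identity by subtracting the two closed-form expressions already obtained for $\lambda^\circ_{\cal S}$ and $\lambda^\bullet_{\cal S}$ in terms of the undirected minimal-tree weight $\nu_{\cal S}$ and the loop weights $p_{tt}$. Both quantities have been written against the same background $\nu_{\cal S}-\sum_{t\in{\cal S}}p_{tt}$, so in the difference this background cancels and only the two $\min$-terms survive.

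Concretely, I would first invoke Claim 6, equation (\ref{lnn}),
\[
\lambda^\circ_{\cal S}=\nu_{\cal S}-\sum_{t\in{\cal S}}p_{tt}+\min_{\begin{smallmatrix}q\in{\cal S}\\ r\notin{\cal S}\end{smallmatrix}}p_{qr},
\]
and then Claim 5, equations (\ref{ln})--(\ref{lu}), in the form
\[
\lambda^\bullet_{\cal S}=\nu_{\cal S}-\sum_{t\in{\cal S}}p_{tt}+\min_{t\in{\cal S}}p_{tt}.
\]
Subtracting the second from the first cancels the common term $\nu_{\cal S}-\sum_{t\in{\cal S}}p_{tt}$ and yields precisely (\ref{ll}).

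A slightly cleaner route, which avoids quoting the explicit formula for $\lambda^\bullet_{\cal S}$, is to apply Claim 8, equation (\ref{lt}) --- valid for every $t\in{\cal S}$ --- to the particular vertex $t=y$ attaining $p_{yy}=\min_{t\in{\cal S}}p_{tt}$. For that choice Claim 5 gives $\lambda^{\bullet y}_{\cal S}=\lambda^\bullet_{\cal S}$, so (\ref{lt}) becomes $\lambda^\circ_{\cal S}=\lambda^\bullet_{\cal S}-\min_{t\in{\cal S}}p_{tt}+p_{ab}$ with $p_{ab}=\min_{q\in{\cal S},\,r\notin{\cal S}}p_{qr}$, and transposing $\lambda^\bullet_{\cal S}$ gives (\ref{ll}) at once.

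There is essentially no obstacle here: the mathematical content is entirely carried by Claims 5--8, and this last claim is just their arithmetic consequence. The one point to respect is that the vertex minimizing the loop weights (which governs $\lambda^\bullet_{\cal S}$) and the boundary pair $(a,b)$ minimizing $p_{qr}$ (which governs $\lambda^\circ_{\cal S}$) need not be related; hence one combines the two closed forms directly rather than attempting a single tree-by-tree comparison.
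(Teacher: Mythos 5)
Your proposal is correct, and your second route (applying Claim 8 at the loop-minimizing vertex $y$, identifying $\lambda^{\bullet y}_{\cal S}=\lambda^{\bullet}_{\cal S}$ via Claim 5, and transposing) is exactly the paper's own proof. The first route by direct subtraction of (\ref{lnn}) and (\ref{ln}) is an equivalent arithmetic rearrangement of the same ingredients, so there is no substantive difference.
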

 \begin{proof}
 
Let $p_{yy}=\min\limits_{t\in{\cal S}}p_{tt}$.  According to Claim 5 $\lambda_{\cal S}^{\bullet}=\lambda_{\cal S}^{\bullet y}$. The expression (\ref{lt}) takes the form 
\begin{equation}
\lambda_{\cal S}^{\circ } =\lambda_{\cal S}^{\bullet}-p_{yy}+p_{ab} \ . 
\label{lab}
\end{equation}
Substituting the definitions of $p_{yy}$ and $p_{ab}$ into the quantities, we obtain (\ref{ll}).
 \end{proof}

The increment of weights (\ref{ll}) has a natural physical interpretation. It is equal to the value of the potential barrier that must be overcome (or, equivalently, the work that must be done) in order to go beyond the deepest point of the set ${\cal S}$.

\section{On the algorithmization of construction of minimal forests and trees of the barrier graph}

For arbitrary digraphs, there is a unique algorithm for constructing minimum outgoing forests (up to a spanning minimum tree) \cite{V8}. It uses as a subalgorithm the very sophisticated Chu-Liu-Edmonds algorithm \cite{china}-\cite{GGST} for outgoing trees with a given root \footnote{The Chu-Liu-Edmonds algorithm is originally formulated for outgoing trees. It is also called optimal branching. By reversing the arc directions, we obtain entering trees. }, which, in turn, is the only algorithm for constructing minimal directed trees.

Let us address the question of how to use the properties of the weights of the barrier graph $V$ (that is, use the corresponding potential graph {\sf P}) to construct minimal directed forests and trees.

\subsection{Construction of minimal trees of the barrier digraph}

Claims 4 and 5 allow us to easily construct minimal trees of an directed graph $V$ on an arbitrary subset ${\cal S}\subseteq{\cal N}$ (as long as it is connected), including a spanning minimal tree. Indeed, the Chu-Liu-Edmonds algorithm for constructing a spanning minimal tree with a given root has complexity $O(N^2)$ for dense directed graphs. If it is required to find the minimum without regard to the root, then in the general situation it must be applied $N$ times. This increases the complexity of the algorithm and it becomes $O(N^3)$. In our situation for the barrier orgraph it is sufficient to apply this algorithm exactly once. According to Claim 5, the root is immediately assigned to the vertex $l$ with the minimal loop: $p_{yy}=\min\limits_{t\in{\cal N}}p_{tt}$. In addition, in the resulting tree, by simply reassigning the root, minimal trees with all remaining roots are immediately determined. Their weights are determined according to Claim 4 from formula (\ref{lqq'}). The complexity of the algorithm remains $O(N^2)$.

Moreover, there is no need to use the Chu-Liu-Edmonds algorithm, which is quite non-trivial in implementation. We can move on to an undirected graph of potential {\sf P}. For it, use the simple algorithms of Prim \cite{Pr} or Kruskal \cite{Kru}. Formally, their complexity for dense graphs is $O(N^2)$. Using any of them, we obtain a minimal spanning undirected tree. Giving it an orientation by simply assigning a root, we again obtain minimal directed trees according to Theorem 3. All their weights are immediately found by Claim 5 from formula (\ref{ln}), which does not increase the overall complexity and it remains $O(N^2)$. If it is necessary to determine a minimal tree regardless of the root, then again the root is immediately assigned to the vertex $l$ with the minimal loop. Thus, the construction of a spanning minimum tree for a barrier digraph is implemented more simply and $N$ times more efficiently than for an arbitrary digraph. 

\subsection{The problem of constructing minimum spanning forests}

With spanning entering forests the situation is significantly different. Knowing the minimal undirected forest {\sf F} does not lead to obtaining the corresponding directed minimal forest. It is not difficult to determine the smallest value of $p_{yy}$ on each connected component, and then orient the forest. The resulting forest, unfortunately, is not minimal. The problem is that the connectivity components of undirected and directed minimal forests can be completely different. It is even possible that the vertex set of any tree of any forest from $\tilde{\textsc F}^k$ does not coincide with any vertex set of any tree of any directed forest from $\tilde{\cal F}^k$.

\subsection{Example} 
\begin{figure}[h]
\unitlength=1mm
\begin{center}
\begin{picture}(120,51)

\put(2,46){$a$}
\put(72,46){$a$}
\put(3,22){$a$}
\put(72,21){$a$}

\put(48,22){$c$}
\put(48,46){$c$}
\put(118,46){$c$}
\put(118,21){$c$}

\put(25,38){$b$}
\put(95,38){$b$}
\put(25,13){$b$}
\put(95,13){$b$}

\put(25,49){$V$}
\put(25,23){{\sf P}}

\put(17,42){4}
\put(83,38){1}
\put(13,37){1}

\put(33,42){3}
\put(37,37){2}

\put(15,13){5}
\put(35,13){4}
\put(25,2){1}
\put(5,12){4}
\put(46,12){2}
\put(106,13){4}

\put(5,20){$\bullet$}
\put(25,10){$\bullet$}
\put(45,20){$\bullet$}
\put(26,6){\circle{10}}
\put(6,16){\circle{10}}
\put(47,16){\circle{10}}
\put(7,21){\line(2,-1){20}}
\put(46,21){\line(-2,-1){20}}

\put(5,45){$\bullet$}
\put(25,35){$\bullet$}
\put(45,45){$\bullet$}
\put(6,45){\vector(2,-1){19}}
\put(26,37){\vector(-2,1){19}}
\put(46,45){\vector(-2,-1){19}}
\put(26,37){\vector(2,1){19}}

\put(90,48){$\{ F\}={\tilde{\cal F}^2}$}
\put(88,23){$\{ {\sf F}\}=\tilde{\textsc F}^2$}

\put(75,20){$\bullet$}
\put(95,10){$\bullet$}
\put(115,20){$\bullet$}

\put(116,21){\line(-2,-1){20}}

\put(75,45){$\bullet$}
\put(95,35){$\bullet$}
\put(115,45){$\bullet$}
\put(76,46){\vector(2,-1){19}}
\end{picture} 
\caption{\small Above is the barrier graph $V$ and its spanning minimum entering forest $F$, consisting of two trees; below is the corresponding potential graph {\sf P} and its spanning minimum forest {\sf F}, consisting of two trees.}
\label{examp}
\end{center}
\end{figure}
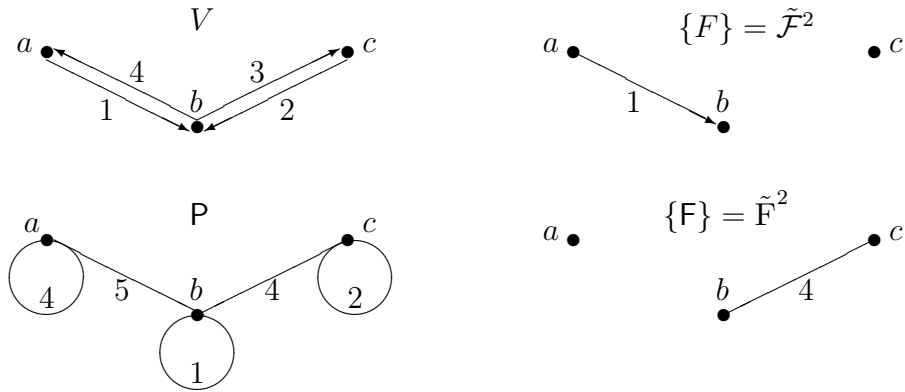
Let us consider an example (see Fig. \ref{examp}) demonstrating a significant difference between the minimal entering forests of the barrier graph $V$ and the minimal forests of the corresponding potential graph {\sf P}. Here the weights of the arcs $V$ are equal to: $v_{ab}=1$, $v_{ba}=4$, $v_{bc}=3$, $v_{cb}=2$. The potential graph {\sf P} has edge (and loop) weights: $p_{ab}=5$, $p_{bc}=4$, $p_{aa}=4$, $p_{bb}=1$, $p_{cc}=2$. As is easy to see, (\ref{poten}) holds for $V$ and {\sf P}.  The minimum spanning forest $F$ of a digraph $V$ consisting of two trees is unique. It consists of an empty tree rooted at $c$ and a tree containing a single arc $(a,b)$ with weight $1$. The minimum spanning forest {\sf F} consisting of two trees is also unique. But it contains an empty tree consisting of one vertex $a$ and a tree consisting of one edge $(b,c)$ of weight $4$. Thus, the vertex set of any tree in the entering forest $F$ does not coincide with any tree vertex set in the forest {\sf F}. 

This example demonstrates that knowing the minimal forests of the potential graph {\sf P} does not help in finding the minimal entering forests of the corresponding barrier digraph $V$. This is quite natural, since the forests of the graph {\sf P} do not contain loops. Moreover, each loop $(q,q)$ of the graph {\sf P} participates in the weight of all arcs outgoing from vertex $q$ in the directed graph $V$. Because of this, other sets of tree vertices are formed. In the case of spanning trees (or trees on a given subset of a set of vertices), this problem does not exist, since the set of tree vertices is known in advance. And on the selected subset, if we set the root, in the weight of any entering tree, according to (\ref{TqT}), the same sum of loops of the graph {\sf P} will appear. Therefore, the type of the minimal entering tree ceases to depend on loops.

\subsection{Guiding Considerations}

Let there be some minimal spanning entering forest $F\in\tilde{\cal F}^{k}$. Then the weights of its trees $T_l^F$, $l\in{\cal K}_F$ are known. According to Property 3, they are equal to $\Upsilon^{T^F_l}=\lambda_{{\cal V}T^F_l}^{\bullet l}=\lambda_{{\cal V}T^F_l}^\bullet$. By the way, for any set ${\cal S}\subseteq {\cal N}$, the value $\lambda_{\cal S}^{\bullet}$, according to Claim 5, is easily determined ((\ref{ln}), (\ref{lu})) from the potential graph {\sf P}. For it, the corresponding minimum $\nu_{\cal S}$ is effectively found using Prim's or Kruskal's algorithm on any set of vertices ${\cal S}$.

According to Theorem 2, to construct the minimal descendant of the entering forest $F$, it is necessary to know the values $\lambda_{{\cal V}T^F_l}^\circ$. 
Then it is necessary to determine the increments $\lambda_{{\cal V}T^F_l}^\circ-\lambda_{{\cal V}T^F_l}^\bullet$ (see (\ref{ll})) and find the smallest among them (let, for definiteness, the smallest increment at $l=y$).
After this, the arcs of the incoming tree $T^F_y$ are replaced by the arcs of the tree $T$ on which the minimum of $\lambda_{{\cal V}T^F_y}^\circ$ is achieved. If in the tree $T$ the arc that comes from ${\cal V}T^F_y$ (and it is the only one) is the arc $(a,b)$, then in fact in the tree $T^F_y$ the root is reassigned to the vertex $a$ and the arc $(a,b)$ is added. As a result, we obtain the minimal entering forest $G\in\tilde{\cal F}^{k-1}$, which is a descendant of the entering forest $F$.

According to Theorem 2 (\ref{potom}) and Claim 9 (\ref{ll}), the problem is to determine the minimum of the increment 

\begin{equation}
\min_{l\in{\cal K}_F}(\lambda_{{\cal V}T^F_l}^\circ -\lambda_{{\cal V}T^F_l}^{\bullet})=\min_{l\in{\cal K}_F}\bigg( \min\limits_{\begin{smallmatrix} q\in{\cal V}T^F_l, \\ r\notin{\cal V}T^F_l \end{smallmatrix}}p_{qr} -\min\limits_{t\in{\cal V}T^F_l}p_{tt}\bigg) \ . 
\label{jll}
\end{equation}

With the definition of values   
$\min\limits_{t\in{\cal V}T^F_l}p_{tt}$ there is no problem. 
They are equal to the weights of the loops $p_{ll}$ of the potential graph {\sf P} at the roots of the corresponding trees of the entering forest $F$. So they are known in advance. The expression (\ref{jll}) takes the form

\begin{equation}
\min_{l\in{\cal K}_F}(\lambda_{{\cal V}T^F_l}^\circ -\lambda_{{\cal V}T^F_l}^{\bullet})=\min_{l\in{\cal K}_F}\bigg( \min\limits_{\begin{smallmatrix} q\in{\cal V}T^F_l, \\ r\notin{\cal V}T^F_l \end{smallmatrix}}p_{qr} -p_{ll}\bigg) \ . 
\label{jlll}
\end{equation}
Thus, the problem is reduced to determining the minimal barriers between the set of vertices of a particular tree and the remaining vertices.

\subsection{Barriers between sets of tree vertices}

We transform the expression (\ref{jlll}) to introduce minimal barriers between different trees: 

\begin{equation}
\min_{l\in{\cal K}_F}\bigg(\min_{i\in{\cal K}_F\setminus\{ l\}} \min\limits_{\begin{smallmatrix} q\in{\cal V}T^F_l, \\ r\in{\cal V}T^F_{i} \end{smallmatrix}}p_{qr}-p_{ll}\bigg)=
\min_{\{l,i\}\subset{\cal K}_F} \bigg( \min\limits_{\begin{smallmatrix} q\in{\cal V}T^F_l, \\ r\in{\cal V}T^F_{i} \end{smallmatrix}}p_{qr}-p_{ll}\bigg)  \ . 
\label{jlll'}
\end{equation}
The value in brackets now corresponds to the potential barrier that must be overcome to get from the deepest point of the tree $T^F_l$, namely the vertex $l$, to the set of vertices of the tree rooted at $i$. And the question now is how to efficiently count transitions between trees.

Let $F\in\tilde{\cal F}^k$. We introduce an enlarged directed graph $V^{k}$ of barriers between the sets of vertices of the trees of the forest $F$. The set of vertices is the roots of the trees: ${\cal V}V^{k}={\cal K}_F$. We define the weights $v^k_{li}$ of the arcs as
\begin{equation}
v^k_{li}= \min\limits_{\begin{smallmatrix} q\in{\cal V}T^F_l, \\ r\in{\cal V}T^F_{i} \end{smallmatrix}}p_{qr}-p_{ll}  \ , \ \ l\neq i. 
\label{vk}
\end{equation}
If in the directed graph $V$ there are no arcs between the sets of vertices of the trees $T^F_l$ and $T^F_{i}$, then we assume that in the directed graph $V^k$ there are no arcs $(l,i)$ and $(i,l)$.  

Similarly, the enlarged graph of the potential ${\sf P}^k$ is defined, for which ${\cal V}{\sf P}^k={\cal V}V^k$. The weights of the loops remain the same $p^k_{ll}=p_{ll}$. The weights of the edges are equal to
\begin{equation}
p^k_{li}= \min\limits_{\begin{smallmatrix} q\in{\cal V}T^F_l, \\ r\in{\cal V}T^F_{i} \end{smallmatrix}}p_{qr}  \ , \ \ l\neq i. 
\label{pk}
\end{equation}
If the graph {\sf P} does not contain edges between the sets of vertices of the trees $T^F_l$ and $T^F_{i}$, then we assume that the enlarged graph ${\sf P}^k$ does not contain the edge $(l,i)$.
 
By construction, the introduced directed graph $V^k$ is a barrier directed graph, and ${\sf P}^k$ is the corresponding potential graph.

Let the arc of the minimum weight of the enlarged barrier graph $V^k$ be the arc $(y,x)$, and let $(a,b)$ be that edge of the original potential graph for which

\begin{equation}
p_{ab}= \min\limits_{\begin{smallmatrix} q\in{\cal V}T^F_x, \\ r\in{\cal V}T^F_{y} \end{smallmatrix}}p_{qr}=p^k_{yx} , \ \  a\in{\cal V}T^F_y, \ b\in{\cal V}T^F_x \ . 
\label{pab}
\end{equation}
Thus, 

\begin{equation}
\min_{(l, i)\in{\cal A}V^k}v^k_{li}=v^k_{yx}=p^k_{yx}-p^k_{yy}=p_{ab}-p_{yy} \ .
\label{vyx}
\end{equation}
In this case, $a\in{\cal V}T^F_y$, $b\in{\cal V}T^F_x$, and the value (\ref{vyx}) by construction coincides with both (\ref{jlll}) and (\ref{ll}), where ${\cal S}={\cal V}T^F_y$, and therefore with (\ref{potoml}). Thus, adding an arc $(a,b)$ to the forest $F$ and reorienting the tree $T^F_y$ by assigning the root to the vertex $a$, we obtain an encroaching forest $G\in\tilde{\cal F}^{k-1}\cap{\cal R}^F$. For it, according to Proposition 2 (\ref{pere}) and Propositions 8 (\ref{lt}) and 9 (\ref{lab}), the weight of the increased tree changes (the tree with the root at $x$ absorbed the tree with the root at $y$): 

\begin{equation*}
\lambda_{{\cal V}T^G_x}^\bullet=\lambda_{{\cal V}T^F_x}^\bullet+\lambda_{{\cal V}T^F_y}^\circ= \lambda_{{\cal V}T^F_x}^\bullet + \lambda_{{\cal V}T^F_y}^\bullet+ p_{ab}-p_{yy} \ .
\end{equation*}
Accordingly, the weight of the new minimum forest is equal to 

\begin{equation}
\Upsilon^G=\Upsilon^F+ p_{ab}-p_{yy} \ . 
\label{uaby}
\end{equation}

\subsection{Recalculation of barriers and potential}

The resulting digraph $G$ is minimal and consists of $k-1$ trees. Let us create an enlarged  digraph of barriers $V^{k-1}$ for it. 
In the digraph of barriers $V^{k-1}$ (and the potential graph ${\sf P}^{k-1}$), compared to $V^k$ (and ${\sf P}^k$), the vertex $y$ has disappeared from the set of vertices. The weights of arcs (edges) not incident to vertex $x$ have remained the same.

 \begin{equation}
v^{k-1}_{li}=v^k_{li} \ , p^{k-1}_{li}=p^k_{li} \ , \ \ \{ l,i\}\subset {\cal K}_G\setminus\{ x\} \ .
\end{equation}

The loop weights of the potential graph ${\sf P}^{k-1}$ for $l \neq x$ are obviously the same. The weight of the loop $(x,x)$ has not changed either, since this vertex remains in the graph.
\begin{equation}
p^{k-1}_{tt}=p^k_{tt} \ , \ \ t\in{\cal V}{\sf P}^{k-1}={\cal V}{\sf P}^{k}\setminus\{ y\} \ .
\end{equation}

For the weights of the edges of the barrier graph ${\sf P}^{k-1}$ incident to the vertex $x$, we have   

\begin{equation*}
p^{k-1}_{lx}= \min\limits_{\begin{smallmatrix} q\in{\cal V}T^G_l, \\ r\in{\cal V}G^F_{x} \end{smallmatrix}}p_{qr}=\min\limits_{\begin{smallmatrix} q\in{\cal V}T^G_l, \\ r\in{\cal V}T^F_{x}\cup{\cal V}T^F_y \end{smallmatrix}}p_{qr}=\min_{t\in\{ x,y\}} \min\limits_{\begin{smallmatrix} q\in{\cal V}T^F_l, \\ r\in{\cal V}F^F_{t} \end{smallmatrix}}p_{qr}  \ , \ \ l\in{\cal V}{\sf P}^{k-1}\setminus \{ x\} \ ,  
\end{equation*}
where do we get that

\begin{equation}
p^{k-1}_{lx}=\min(p^k_{lx}, p^k_{ly}) \ , \ \ l\neq x \ .
\label{plx}
\end{equation}
After this, the weights of the corresponding arcs of the barrier digraph $V^k$ are immediately determined

 \begin{equation}
v^{k-1}_{lx}=p^{k-1}_{lx} -p^{k-1}_{ll}  , \ \  v^{k-1}_{xl}=p^{k-1}_{lx} -p^{k-1}_{xx}  , \ \ l\in{\cal V}{\sf P}^{k-1}\setminus \{ x\} \ .
\label{vlx}
\end{equation}

\subsection{Algorithm for constructing minimum entering forests and its complexity}

The weight matrices are denoted as the (or)graph itself. In the absence of an arc (edge), the weight is interpreted as $\infty$.

Zero step (start of the algorithm).

At this step, we have weight matrices ${\bf V}^N={\bf V}$, and ${\bf P}^N={\bf P}$. The initial minimum forest is a spanning forest $F\in\tilde{\cal F}^N$, which is empty. It has no arcs. Its weight is $\Upsilon^F=0$.

Lists $\Omega_t$ of edges (the orientation within the list is unimportant, so we use the term edge rather than arc) of the trees of the minimal forest are created. These lists are indexed by the roots. Knowing the edges and roots, it is easy to determine what an entering forest looks like. At the initial stage, there are $N$ of these lists, and they are all empty.  

Next, the procedure is recurrent.

$l$-th step ($l=1,2,\ldots N-1$).

The currently constructed minimal entering forest $F$ contains $k=N+1-l$ roots. It is defined by the current lists $\Omega_t$ of edges and the roots $t\in{\cal K}_F$. The current matrix ${\bf V}^k$ of barriers and the matrix ${\bf P}^k$ of potentials are known.

$a)$ 
 The minimum element in the barrier matrix 
${\bf V}^k$ is determined. The roots corresponding to this element are given the labels $y$ and $x$: 
 
 $$v_{yx}=\min\limits_{(l,i)\in{\cal V}V^k}v_{li}^k \ .$$   

$b)$ The vertices of the edge of the original potential graph corresponding to the element $p_{yx}^k$ receive the labels $a$ and $b$:
$$
p_{ab}=p^k_{yx} \ , \ \ a\in{\cal V}T_y^F, \ \ b\in{\cal V}T_x^F \ . 
$$

$c)$ 
The tree $T^F_y$ is reoriented by assigning a root at $a$. An arc $(a,b)$ is added to it (with its weight $v_{ab}$). This turns the forest $F$ into the new minimum entering forest under study belonging to $\tilde{\cal F}^{k-1}$. The tree rooted at $x$ absorbs the tree rooted at $y$. We note the transformations necessary for this. According to (\ref{uaby}), the new weight of the minimum forest is

\begin{equation*}
\Upsilon^F:=\Upsilon^F+ p_{ab}-p_{yy} \ .
\end{equation*}
The edge lists $\Omega_x$ and $\Omega_y$ are combined into a single list $\Omega_x$ and the edge $(a,b)$ is added to it. This completely defines the new tree $T^F_x$ and the new minimal entering forest $F$ itself. 

$d)$ Matrices ${\bf V}^{k-1}$ and ${\bf P}^{k-1}$ are constructed. For the edge weight matrix, $k-2$ comparisons (\ref{plx}) are required to be performed to select the minimum ones. They are used to adjust the current edge matrix of the potential. 
At the same time, the edges of the original potential graph that yield these minima (which are used in point $b)$ are remembered. 
The modified $2(k-2)$ arcs (\ref{vlx}) are calculated and the matrix of arc weights is adjusted. Now the index corresponding to the vertex $y$ is removed from the matrices. It is no longer the root of the tree, and the dimension of the matrices is reduced by $1$ and becomes equal to $k-1$. The matrices 
${\bf V}^{k-1}$ and ${\bf P}^{k-1}$ are obtained.

$e)$ We set $l:=l+1$. If $l=N$, then end. Otherwise, go to point $a)$. 

Property 4, common to oriented graphs, and Theorem 2, taking into account the equality of the formula (\ref{potoml}) in it to the value (\ref{vyx}), which is fully used in the algorithm, guarantee that the proposed procedure can construct any minimal forest from $\tilde{\cal F}^k$, $k=1,2,\ldots , N$.

\subsection{Complexity of the algorithm}

When estimating the complexity, we assume that the original graph is dense $|{\cal A}V|\backsim N^2$. 

$a)$ 
Selecting the minimum element at each step with the current number of roots $k$ (the matrix size is $k\times k$) takes $O(k^2)$ operations. Summing up over all steps, we get the complexity of this most expensive item $O(N^3)$.

$d)$ Comparison and correction of arc weights requires $O(k)$ operations. Summing up over all steps, we obtain the complexity of this point $O(N^2)$.

The total complexity is therefore equal to $O(N^3)$.

\section{More about the properties of minimal forests of the barrier digraph}

According to Theorem 3, the set of minimal incoming forests created as a result of the algorithm has the following property by construction. Let $F\in\tilde{\cal F}^n$ and $F'\in\tilde{\cal F}^m$, $n>m$, and they are obtained by the above procedure. When the orientation is removed, they turn into some spanning forests {\sf F} and {\sf F'} of the potential graph {\sf P}, respectively (they are, generally speaking, no longer minimal, as the example considered shows, although each individual tree is minimal on its set of vertices). Then ${\cal E}{\sf F}\subset{\cal E}{\sf F'}$. An arbitrary directed graph $V$ does not have such a property for its minimal forests when removing the orientation (regardless of the weights themselves), regardless of the algorithm by which they were obtained.

If we abstract from the algorithm and talk about the entering forests themselves, then the described property can be formulated as follows. Let $n>m$, then for any $F\in\tilde{\cal F}^n$ there is an $F'\in\tilde{\cal F}^m$ (and vice versa, for any $F'\in\tilde{\cal F}^m$ there is an $F\in\tilde{\cal F}^n$) such that when removing the orientation for the resulting undirected forests {\sf P} and ${\sf P}'$, respectively, ${\cal E}{\sf F}\subset{\cal E}{\sf F'}$.  

\section{Road network for cities of varying importance}

To illustrate the usefulness of Prim and Kruskal's algorithms, an example is usually given with a network of roads that need to connect $N$ cities. The cost weight of a direct connection between city $i$ and city $j$ is known and equal to $p_{ij}$. An undirected graph {\sf P} with such edge weights is created and its minimum spanning tree is found. 
We will use the same example to demonstrate the significant advantages of minimum spanning directed forests.
 
The real situation is far from ideal and it may well be that funds can only be allocated to build $l$ roads. And it remains to be seen which cities will be connected in order to spend the minimum amount of funds. For now, this does not complicate the task much. The point is that the Kruskal algorithm procedure at each step builds a spanning forest of minimum weight, which completely solves the issue of building $l$ roads out of $N-1$ required. And when new funds appear, it is known where to continue construction.

But the reality is still more complicated.  The fact is that the cities themselves are obviously unequal in importance. Therefore, when assessing the effectiveness of construction costs, it is necessary to subtract some values of cities that are connected by roads from the total direct costs $p_{ij}$. If you immediately select the $k$ most important cities and connect only them, then this will obviously not be optimal. And this is not only because they may be too far from each other. If additional funds appear, then the connection with the cities further in importance will become less and less optimal. Let's say the city was located near an already built road between more important objects, and now it is necessary to build a road to it from one of them, instead of immediately passing through it during the previous construction. Moreover, it is essential that the sequence in which the construction is carried out is optimal. After all, funds may suddenly run out and they will not be enough for the planned $l$ roads, but we would like the costs for those already built to be effective.

The next natural idea is to subtract the value of the linked cities from the direct costs at each step in the utility assessment. This looks appealing, but the devil is in the details. Indeed, if you connect two cities, each of which is already connected to someone, then there is no need to subtract the value of these cities at all (it is already taken into account). But if you connect cities that have not yet been affected by the connection, then you need to subtract the value of both of them. And finally, if one of the cities is already connected to someone, and the second is not connected to anyone, then only the value of the latter should be subtracted. This indicates that with this approach, there is no single regular procedure by which the efficiency of costs at each step would be consistently studied.

And that's where minimal entering forests and algorithms for their construction come to the rescue. Indeed, the values of cities are declared by the weights of the loops $p_{ii}$. A graph of potential {\sf P} and a digraph of barriers $V$ with weights of arcs (\ref{poten}) is created (the weight of each arc $v_{ij}=p_{ij}-p_{ii}$  it makes sense for the effective cost of connecting the cities of $i$ and $j$, taking into account the value of the city of $i$). After which the algorithm above sequentially determines the minimum spanning entering forests of the barrier graph $V$. With this approach, at each step, when calculating the effective utility, the value of exactly one city out of two connected by the road will be taken into account.  Effective costs are minimal. Note that according to the specified procedure, if all $N$ cities are connected, then by virtue of Theorem 3, direct costs are also minimal.

\section{Entering and outgoing forests and trees}

Classically \cite{T} by directed trees it is customary to understand outgoing trees (or those growing from the root, as well as rooted). The same applies to algorithms. For example, all the main variants of the algorithm for the minimum spanning tree \cite{china}-\cite{GGST} build exactly an outgoing tree. However, when describing the minors of matrices, the first to emerge were exactly the opposite --- the entering forests \cite{Fied-Sed:58}. Then, in the spectra of directed graphs, namely, in the study of the Laplace matrix (a matrix with a zero sum over a row) of the directed graph: $l_{ij}=-v_{ij}$, $j\neq j$; $l_{ii}=-\sum\limits_{j\neq i}l_{ij}$, entering forests arise \cite{V}. Further, after the analysis of the pseudo-potential, entering forests themselves arise again in the lemmas on Markov chains in \cite{VF}. In all these works, emtering forests are called ${\cal W}$-graphs (${\cal W}$ is the set of vertices from which arcs do not outgo). When deriving the formula for all minors \cite{chai, moon}, the digraph is immediately called a forest, which in strict notation is an entering forest. For the standard Laplace matrix of an digraph, no preliminary actions are required, and in the study of its spectrum and eigenspaces, entering forests arise automatically \cite{V3}-\cite{V5}. 

Using out-forests in matrix analysis requires a bit more work. 
First, the arcs of the directed graph are reversed, and the Laplace matrix (with zero sum per row) is created for the resulting directed graph. In its analysis, but in the notation of the weights of the original directed graph (without reorienting the arcs), out-forests arise. This matrix itself, with this approach, is called the Kirchhoff matrix ($l_{ij}=-v_{ji}$, $j\neq j$; $l_{ii}=-\sum\limits_{j\neq i}l_{ij}$) \cite{ChA}. This method of transition somewhat confuses the situation.    

When studying the properties of an directed graph through the matrices corresponding to it, it would be worthwhile to use a unified approach to creating these matrices, without special tricks. It seems more natural to follow the historical approach and names \cite{CDZ}. Namely, a weighted adjacency matrix ${\bf V}$ of an directed graph $V$ without loops with elements $v_{ij}$ is created. Further, by analogy with undirected graphs, it is proposed to create two diagonal matrices of half-degrees (valences) of entry and exit ${\bf D}^{in}$ and ${\bf D}^{out}$ with diagonal elements $d^{out}_{ii}=\sum\limits_{j\neq i}v_{ij}$, $d^{in}_{ii}=\sum\limits_{j\neq i}v_{ji}$ (in undirected graphs this is one matrix of degrees). After this (also by analogy with undirected graphs), the matrices of incoming and outgoing conductivities are determined: ${\bf C}^{in,out}={\bf D}^{in,out}-{\bf V}$ (in the case of undirected graphs, this is the same conductivity matrix). With this approach, the Laplace matrix turns out to be the matrix of outgoing conductivities ${\bf C}^{out}$ (it is precisely the one whose sum of elements by rows is equal to zero) and its analysis is described by incoming forests. But the matrix of incoming conductivities ${\bf C}^{in}$ has a zero sum by columns, and outgoing forests arise in its analysis. This approach is completely symmetrical with respect to incoming and outgoing forests. Moreover, the weights of the arcs of the directed graph can be complex and this will not affect the formulas in any way. The matrix of incoming conductivities itself can naturally be considered a Kirchhoff matrix. There is no need to forcibly transpose it so that it also turns out to be a Laplace-type matrix.

Note also that the forest structure of an arbitrary directed graph may differ significantly for its incoming and outgoing forests. So the selection of incoming forests still suffers from some one-sidedness.  

For purely directed graph problems (not matrix type), replacing arcs with inverse ones does not change anything significant in principle. Therefore, the use of incoming or outgoing forest variants is more related to the specifics of a particular problem statement: is it more important to you from where or to? 

Of course, all the results of this paper and the papers \cite{V6}-\cite{V8} on which it is based can be reformulated for out-forests. This will entail some modification of definitions. Let's say that the weight of a subgraph of an directed graph $V$ on a subset ${\cal S}$ of a vertex set (\ref{ves}) should be defined not as the sum of the weights of the arcs outgoing from the vertices of the set ${\cal S}$, but as the sum of the weights of the arcs that enter ${\cal S}$. In general, the transition to outgoing forests is not particularly difficult, and the description in them is completely equivalent to the description in incoming forests. But in addition to the fact that the characteristics of the Laplace matrices are described by incoming forests, such forests are more natural in the energy interpretation, so they are used in this work.


\centerline{Abstract}

\begin{center}{Digraphs of potential barriers: properties of their tree structure and an algorithm for minimal forests}
\end{center}

\centerline{Buslov V.A.}

\parbox[t]{12cm}
{\small 
For a weighted digraph without loops $V$, the arc weights of which can be obtained from an undirected graph with loops ${\sf P}$ according to the rule $v_{ij}=p_{ij}-p_{ii}$, the properties are studied. An efficient algorithm for constructing directed trees of minimum weight and an efficient algorithm for constructing spanning directed forests of minimum weight are proposed.} 

\vspace{0.5cm}

St. Petersburg State University, Faculty of Physics, Department of Computational Physics

198504 St. Petersburg, Old Peterhof, st. Ulyanovskaya, 3

Email: abvabv@bk.ru, v.buslov@spbu.ru

\end{document}